\documentclass[a4paper,11pt]{amsart}

\usepackage{amsmath,amssymb,amsthm,latexsym,amscd,mathrsfs}
\usepackage{indentfirst}
\usepackage{stmaryrd}
\usepackage{graphicx}
\usepackage{extarrows}
\usepackage{multirow}
\usepackage{latexsym}
\usepackage{amsfonts}
\usepackage{color}
\usepackage{pictexwd,dcpic}
\usepackage{graphicx}
\usepackage{psfrag}
\usepackage{hyperref}
\usepackage{comment}
\usepackage{enumerate}

 \setlength{\parindent}{2em}
 \setlength{\parskip}{3pt plus1pt minus2pt}
 \setlength{\baselineskip}{20pt plus2pt minus1pt}
 \setlength{\textheight}{21 true cm}
 \setlength{\textwidth}{14.5true cm}
  \setlength{\headsep}{10truemm}
  \addtolength{\hoffset}{-12mm}

\numberwithin{equation}{section} \theoremstyle{plain}
\newtheorem{thm}{Theorem}[section]

\newtheorem{prop}[thm]{Proposition}
\newtheorem{lem}[thm]{Lemma}

\makeatletter

\def\<{\langle}
\def\>{\rangle}
\def\({\left(}
\def\){\right)}
\def\[{\left[}
\def\]{\right]}

\makeatother
\title{Normal Scalar Curvature Inequality on a Class of Austere Submanifolds}
\author[J.Q. Ge]{Jianquan Ge}
\address{School of Mathematical Sciences, Laboratory of Mathematics and Complex Systems, Beijing Normal
University, Beijing 100875, P.R. CHINA.}
\email{jqge@bnu.edu.cn}

\author[Y. Tao]{Ya Tao}
\address{Chern Institute of Mathematics, Nankai University, Tianjin 300071, P. R. China.}
\email{tao-ya@mail.nankai.edu.cn}

\author[Y. Zhou]{Yi Zhou$^{*}$}
\address{Beijing International Center for Mathematical Research, Peking University,
Beijing 100871, P.R. CHINA.}
\email{yizhou@bicmr.pku.edu.cn}

\subjclass[2010]{53C42, 53A07, 15A45.}
\date{}
\keywords{Austere submanifold; normal scalar curvature; DDVV-type inequality.}
\thanks {$^{*}$ the corresponding author.}
\thanks{J. Q. Ge is partially supported by NSFC (No. 12171037, 12571049) and the Fundamental Research Funds for the Central Universities.}
\thanks{Y. Zhou is partially supported by  NSFC (No. 12171037, 12271040), China Postdoctoral Science Foundation (No. BX20230018)
and National Key R$\&$D Program of China 2020YFA0712800.}

\begin{document}
\maketitle

\begin{abstract}
In this paper, we establish new normal scalar curvature inequalities on a class of austere submanifolds
by proving sharper DDVV-type inequalities on associated austere subspaces.
We also provide some examples of austere submanifolds in this class and point out
one of them achieves the equality in our normal scalar curvature inequality everywhere.
As a byproduct, we obtain a Simons-type gap theorem for closed austere submanifolds in unit spheres
which belong to that class.
\end{abstract}

\section{Introduction}\label{introduction}
The concept of austere submanifolds was introduced by Harvey-Lawson \cite{HL82}
for constructing special Lagrangian submanifolds in $\mathbb{C}^n$.
A submanifold of a Riemannian manifold is said to be austere if
the nonzero principal curvatures in any normal direction occur in oppositely signed pairs.
Clearly, austerity implies minimality, and these two concepts are equivalent only for curves and surfaces.

In \cite{B91}, Bryant initiated the study of austere submanifolds
which began with describing the possible second fundamental forms of austere submanifolds.
Let $M^n$ be a submanifold of a Riemannian manifold $N^{n+m}$,
and let $\operatorname{II}$ be the second fundamental form.
For any $p\in M$, let $|\operatorname{II}_p|\subset S^2(T_p^*M)$ be the subspace spanned by
$\{\operatorname{II}_{\xi}: \xi\in T_p^\bot M\}$,
where $\operatorname{II}_{\xi}(X, Y):=\<\operatorname{II}(X, Y),\xi\>$ denotes the second fundamental form
along the normal direction $\xi$.
By fixing an orthonormal basis, we can identify the vector space $|\operatorname{II}_p|$
with a subspace of the vector space $\mathcal{S}_n$ of $n\times n$ real symmetric matrices.
If a subspace $\mathcal{Q}\subset\mathcal{S}_n$ is consisting of matrices
whose nonzero eigenvalues occur in oppositely signed pairs,
we say that $\mathcal{Q}$ is an austere subspace.
Thus the problem is translated to classify austere subspaces
up to the conjugate action of $\operatorname{O}(n)$ on $\mathcal{S}_n$.
There is an insightful observation that for any orthogonal matrix $P\in\operatorname{O}(n)$,
the subspace $\mathcal{Q}_P:=\{A\in\mathcal{S}_n: PA+AP=0\}$ is austere.
For example, an important class of austere subspaces was obtained by assuming that
$P^2=-I_n$ and $n$ is even.
In fact, almost complex submanifolds in K\"{a}hler manifolds give this class of austere subspaces.
By the conjugate action of $\operatorname{O}(n)$, we can take
$P=\begin{pmatrix}
0 & I_{\frac{n}2}  \\
-I_{\frac{n}2}  & 0 \\
\end{pmatrix}$,
and thus the austere subspace $\mathcal{Q}_P$ is
$$\mathcal{Q}(n):=\left\{\begin{pmatrix}
a_1 & a_2  \\
a_2 & -a_1 \\
\end{pmatrix}: a_1, a_2\in\mathcal{S}_{\frac{n}2}\right\}.$$
Another typical class of austere subspaces came from assuming that $P^2=I_n$ and $P\neq\pm I_n$.
In this case, there exist positive integers $p,q$ such that $P$ conjugates to
$\begin{pmatrix}
I_p & 0  \\
0  & -I_q \\
\end{pmatrix}$,
and the austere subspace $\mathcal{Q}_P$ conjugates to
$$\mathcal{Q}(p,q):=\left\{\begin{pmatrix}
0 & a  \\
a^t & 0 \\
\end{pmatrix}: a\in M(p, q)\right\},$$
where $M(p, q)$ denotes the vector space of $p\times q$ real matrices.
After complicated construction and proof, Bryant classified austere subspaces in $\mathcal{S}_n$ for $n\leq4$. Based on this algebraic classification, Bryant locally classified $3$-dimensional austere submanifolds in Euclidean spaces, Ionel-Ivey \cite{II10, II12} studied $4$-dimensional austere submanifolds in Euclidean spaces and obtained some classification results under additional assumptions.
In addition, $3$-dimensional austere submanifolds in unit spheres and hyperbolic spaces were locally classified by Dajczer-Florit \cite{DF01} and Choi-Lu \cite{CL08}, respectively.
More research on the construction and classification of austere submanifolds can be found in \cite{DV23, GZ23, IST09, KM22}.

Austere submanifolds also come from minimal Wintgen ideal submanifolds which are minimal submanifolds
attaining the following normal scalar curvature inequality everywhere \cite{DT09, XLMW14, XLMW18}.
For an immersed submanifold $M^n$ of a real space form $N^{n+m}(\kappa)$ with constant sectional curvature $\kappa$, De Smet-Dillen-Verstraelen-Vrancken \cite{DDVV99} proposed the so-called DDVV conjecture:
there exists a pointwise normal scalar curvature inequality (also known as DDVV inequality)
\begin{equation}\label{GeoDDVV)}
\rho+\rho^{\bot}\leq \|H\|^2+\kappa,
\end{equation}
where $\rho$ denotes the normalized scalar curvature, $\rho^{\bot}$ denotes the normalized normal scalar curvature and $\|H\|^2$ denotes the mean curvature.
Dillen-Fastenakels-Veken \cite{DFV07} transformed (\ref{GeoDDVV)}) into the so-called DDVV inequality
\begin{equation}\label{AlgDDVV)}
\sum^m_{r,s=1}\|\[B_r,B_s\]\|^2\leq\(\sum^m_{r=1}\|B_r\|^2\)^2
\end{equation}
for $n\times n$ real symmetric matrices $B_1, \ldots, B_m$,
where $[A,B]:=AB-BA$ denotes the commutator and
$\|B\|^2:=\operatorname{tr}(BB^t)$ denotes the squared Frobenius norm.
After some partial results \cite{CL08, DFV07, DHTV04, Lu07},
the DDVV inequality (\ref{AlgDDVV)}) was finally proved by Lu \cite{Lu11} and Ge-Tang \cite{GT08} independently.
In addition, the equality condition in (\ref{AlgDDVV)}) were completely determined in \cite{GT08}.
Hence we know that the equality in (\ref{GeoDDVV)}) holds at some point $p\in M$ if and only if there exist an orthonormal basis $\{e_1,\cdots,e_n\}$ of $T_pM$ and an orthonormal basis $\{\xi_1,\cdots,\xi_m\}$ of $T_p^{\bot}M$ such that
the matrices corresponding to the second fundamental form along directions $\xi_1,\cdots,\xi_m$ are
$$A_1=\lambda_1I_n+\mu\operatorname{diag}(H_1,0),\
A_2=\lambda_2I_n+\mu\operatorname{diag}(H_2,0),\
A_3=\lambda_3I_n$$ and $A_{\xi_r}=0$ for $r>3$,
where $\mu, \lambda_1, \lambda_2, \lambda_3$ are real numbers and $$H_1=
\begin{pmatrix}
1 & 0 \\
0 & -1 \\
\end{pmatrix},\
H_2=
\begin{pmatrix}
0 & 1 \\
1 & 0 \\
\end{pmatrix}.$$
See \cite{GLTZ24, GT11} for detailed surveys on the history and developments of the DDVV inequality.
Among many generalizations of the normal scalar curvature inequality,
Ge-Tang-Yan \cite{GTY20} proved sharper normal scalar curvature inequalities
on the focal submanifolds of isoparametric hypersurfaces in unit spheres.
Note that the isoparametric focal submanifolds are a rich source of austere submanifolds.
It is natural to consider new normal scalar curvature inequalities on more general austere submanifolds.

For a given austere subspace $\mathcal{Q}$,
we say that an austere submanifold $M$ is of type $\mathcal{Q}$ if for each point $p\in M$,
there exists an orthonormal basis of $T_pM$ such that $|\operatorname{II}_p|\subset\mathcal{Q}$.
In some sense, this definition generalizes the three types of $4$-dimensional austere submanifolds
introduced by Ionel-Ivey \cite{II10, II12}.
Since minimal Wintgen ideal submanifolds are austere submanifolds of type $\mathcal{Q}(n)$,
we do not expect new normal scalar curvature inequalities for this class of austere submanifolds.
However, for austere submanifolds of type $\mathcal{Q}(p, q)$,
we can establish sharper normal scalar curvature inequalities by proving the following DDVV-type inequalities.

To describe the equality condition in Theorem \ref{DDVVQ(p,q)} and \ref{DDVVQ(n-1,1)}, we define
a $K(n, m):=\operatorname{O}(n)\times\operatorname{O}(m)$ action on a family of $n\times n$ real matrices $(B_1,\cdots,B_m)$ by
$$(P,R)\cdot(B_1,\cdots,B_m):=\bigg(\sum_{j=1}^mR_{j1}P^tB_jP,\cdots,\sum_{j=1}^mR_{jm}P^tB_jP\bigg),$$ where $P\in\operatorname{O}(n)$ and $R=(R_{jk})\in\operatorname{O}(m)$ acts as a rotation on the matrix tuple $(P^tB_1P,\cdots,P^tB_mP)$.
In addition, we use the notation $E_{ij}$ to denote the $n\times n$ matrix with $1$ in position $(i,j)$ and $0$ elsewhere.

\begin{thm}\label{DDVVQ(p,q)}
Suppose $B_1,\cdots,B_m\in\mathcal{Q}(p,q)$. Then
\begin{equation}\label{ineqQ(p,q)}
\sum^m_{r,s=1}\|\[B_r,B_s\]\|^2\leq \frac12\(\sum^m_{r=1}\|B_r\|^2\)^2.
\end{equation}
The equality holds if and only if there exists a $(P,R)\in K(n, m)$ such that
$$(P,R)\cdot(B_1,\cdots,B_m)=
(\lambda\operatorname{diag}(C_1,0), \lambda\operatorname{diag}(C_2,0), 0, \cdots, 0)$$
for some $\lambda\geq0$, where
$$C_1:=
\begin{pmatrix}
0 & 0 & 1 & 0 \\
0 & 0 & 0 & 1 \\
1 & 0 & 0 & 0\\
0 & 1 & 0 & 0\\
\end{pmatrix},\
C_2:=
\begin{pmatrix}
0 & 0 & 0 & -1\\
0 & 0 & 1  & 0\\
0 & 1 & 0 & 0\\
-1 & 0 & 0 & 0\\
\end{pmatrix}.$$
\end{thm}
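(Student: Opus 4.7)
The plan is to adapt the variational approach of Ge--Tang \cite{GT08} for the original DDVV inequality, taking advantage of the rectangular block structure of $\mathcal{Q}(p,q)$. First I would rewrite each $B_r\in\mathcal{Q}(p,q)$ as $B_r=\bigl(\begin{smallmatrix}0 & a_r\\ a_r^t & 0\end{smallmatrix}\bigr)$ with $a_r\in M(p,q)$. A short computation gives $\|B_r\|^2=2\|a_r\|^2$ and
\begin{equation*}
\[B_r,B_s\] = \operatorname{diag}(a_r a_s^t - a_s a_r^t,\; a_r^t a_s - a_s^t a_r);
\end{equation*}
both blocks are skew-symmetric, so $\|\[B_r,B_s\]\|^2 = \|a_r a_s^t - a_s a_r^t\|^2 + \|a_r^t a_s - a_s^t a_r\|^2$. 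Thus \eqref{ineqQ(p,q)} is equivalent to a matrix inequality on tuples $(a_r)\in M(p,q)^m$. The improvement from the generic DDVV factor $1$ to $1/2$ reflects the restriction that for $B_r\in\mathcal{Q}(p,q)$, the commutator $\[B_r,B_s\]$ is forced to lie in the strictly smaller subspace of block-diagonal skew-symmetric $n\times n$ matrices.

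Next I would exploit the $K(n,m)$-invariance. The subgroup $(\operatorname{O}(p)\times\operatorname{O}(q))\times\operatorname{O}(m)\subset K(n,m)$ preserves $\mathcal{Q}(p,q)^m$ and acts on the rectangular tuple as $a_r\mapsto \sum_s R_{sr}\,U a_s V^t$; both sides of \eqref{ineqQ(p,q)} are invariant. By compactness a maximizer $(a_r^{*})$ of the left-hand side on the sphere $\sum\|a_r\|^2=\tfrac12$ exists. Using $\operatorname{O}(m)$ to orthogonalize the $a_r^{*}$ in the Frobenius pairing, and $\operatorname{O}(p)\times\operatorname{O}(q)$ to put $a_1^{*}$ in singular value decomposition, one passes to a canonical frame. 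A direct variational calculation then turns the Lagrange stationary condition into the nonlinear matrix system
\begin{equation*}
\sum_s\bigl[\,a_r(a_s^t a_s)+(a_s a_s^t)a_r - 2\,a_s a_r^t a_s\,\bigr]=\mu\,a_r\qquad(r=1,\ldots,m),
\end{equation*}
where $\mu$ is, up to a universal constant, the ratio we wish to bound.

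The decisive step is then the classification of the solutions of this system at an absolute maximum. The target conclusion is that at most two of the $a_r^{*}$ are nonzero and that, after a further $K(n,m)$ action, the tuple takes the form stated in the theorem, namely $(\lambda\operatorname{diag}(C_1,0),\lambda\operatorname{diag}(C_2,0),0,\ldots,0)$. Once the classification is complete, a direct computation yields $\|\[C_1,C_2\]\|^2 = 16 = \tfrac12(\|C_1\|^2+\|C_2\|^2)^2$, simultaneously proving \eqref{ineqQ(p,q)} and pinning down its equality case. I expect this extremal classification to be the main technical obstacle: the Lagrange equations form a nonlinear system on a relatively high-dimensional variety, and excluding spurious higher-rank extremals, or those supported on more than two $a_r^{*}$, will require a finely tuned case analysis in the spirit of, but extending, the argument of \cite{GT08}.
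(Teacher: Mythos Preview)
Your framework is sound and your block-form computation of the commutator is correct, but the proposal has a genuine gap at precisely the step you flag as ``the main technical obstacle.'' Writing down the Lagrange system in the $a_r$ variables and then hoping to classify its maximizers directly is not a strategy; it is a restatement of the problem. The paper's proof also follows Ge--Tang, but the substance lies in how the extremal analysis is actually executed, and that part is absent from your proposal.

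Concretely, the paper does \emph{not} work with your Lagrange equations in the $a_r$'s. Instead it passes to eigenvalue coordinates: writing $(B_1,\ldots,B_m)=(\hat E_1,\ldots,\hat E_N)B$ in the standard basis of $\mathcal{Q}(p,q)$ (with $N=pq$) and diagonalizing $BB^t=Q\operatorname{diag}(x_1,\ldots,x_N)Q^t$, the inequality becomes
\[
f_Q(x)=\sum_{\alpha,\beta}x_\alpha x_\beta\|[\hat Q_\alpha,\hat Q_\beta]\|^2-\tfrac12\Bigl(\sum_\alpha x_\alpha\Bigr)^2\le 0
\]
for all $x\in\mathbb{R}^N_+$ and all $Q\in\operatorname{SO}(N)$. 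This is then proved by a \emph{continuity method} on $\operatorname{SO}(N)$: one shows the set $G_\varepsilon=\{Q:f_Q<0\text{ on }\Delta_\varepsilon\}$ is nonempty, open, and closed. Closedness is the hard part and rests on an a~priori estimate: for any orthonormal basis $\{\hat Q_\alpha\}$ of $\mathcal{Q}(p,q)$, any fixed $\alpha$, and any subset $J_\alpha\subset S$,
\[
\sum_{\beta\in J_\alpha}\Bigl(\|[\hat Q_\alpha,\hat Q_\beta]\|^2-\tfrac12\Bigr)\le\tfrac12.
\]
This estimate (the paper's Lemma~2.3) is the new ingredient specific to $\mathcal{Q}(p,q)$; its proof puts $\hat Q_\alpha$ in singular-value form and reduces to a combinatorial statement about which pairs $(\lambda_i,\lambda_j)$ of singular values can satisfy $\lambda_i+\lambda_j>1/\sqrt{2}$ under the constraint $\sum\lambda_i^2=1/2$ (Lemmas~2.1--2.2). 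Feeding this bound into the Lagrange conditions in the $x$-variables produces a contradiction unless one is at the boundary, and tracking the equality cases through Lemmas~2.1--2.3 forces the $(C_1,C_2)$ configuration. Your proposal contains neither the eigenvalue-coordinate reduction, nor the continuity architecture, nor---most importantly---any analogue of the singular-value estimate that drives the whole argument. Without it, the nonlinear system you wrote in the $a_r$'s has no obvious handle.
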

The proof follows Ge-Tang's method \cite{GT08} which enables us to derive the equality condition.
Except for the trivial case $B_1=\cdots=B_m=0$,
the equality in the above theorem can only be achieved when $p, q\geq 2$.
For the remaining cases, we have the following sharper inequalities.

\begin{thm}\label{DDVVQ(n-1,1)}
Suppose $B_1,\cdots,B_m\in\mathcal{Q}(n-1,1)$ with $n\geq3$.
Then
\begin{equation}\label{ineqQ(n-1,1)}
\sum^m_{r,s=1}\|\[B_r,B_s\]\|^2\leq \frac{d-1}{2d}\(\sum^m_{r=1}\|B_r\|^2\)^2,
\end{equation}
where $d=\operatorname{dim}\operatorname{Span}\{B_1,\cdots,B_m\}$.
The equality holds if and only if there exists a $(P,R)\in K(n, m)$ such that
$$(P,R)\cdot(B_1,\cdots,B_m)=(\lambda D_1, \lambda D_2, \cdots, \lambda D_d, 0, \cdots, 0)$$
for some $\lambda\geq0$, where $$D_r:=E_{rn}+E_{nr}\ \mbox{for}\ 1\leq r\leq d.$$
\end{thm}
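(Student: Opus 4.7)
The plan is to reduce inequality \eqref{ineqQ(n-1,1)} to a clean spectral estimate for a single Gram matrix, by exploiting the fact that each element of $\mathcal{Q}(n-1,1)$ is completely determined by a single column vector in $\mathbb{R}^{n-1}$.

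Concretely, I would write $B_r = \begin{pmatrix} 0 & v_r \\ v_r^t & 0 \end{pmatrix}$ with $v_r \in \mathbb{R}^{n-1}$, so that $\|B_r\|^2 = 2\|v_r\|^2$, and observe that $[B_r,B_s]$ is block-diagonal with unique nonzero block equal to the antisymmetric matrix $v_rv_s^t - v_sv_r^t$, whence a short trace computation gives $\|[B_r,B_s]\|^2 = 2\bigl(\|v_r\|^2\|v_s\|^2 - (v_r\cdot v_s)^2\bigr)$. Assembling the $v_r$ as columns of an $(n-1)\times m$ matrix $V$ and forming its Gram matrix $G = V^tV \in \mathcal{S}_m$, one has $\sum_r\|v_r\|^2 = \operatorname{tr} G$ and $\sum_{r,s}(v_r\cdot v_s)^2 = \operatorname{tr} G^2$, so substitution transforms \eqref{ineqQ(n-1,1)} into the purely spectral inequality
\begin{equation*}
(\operatorname{tr} G)^2 \leq d\cdot\operatorname{tr} G^2.
\end{equation*}
Since $v\mapsto B$ is a linear isomorphism from $\mathbb{R}^{n-1}$ onto $\mathcal{Q}(n-1,1)$, the integer $d = \operatorname{dim}\operatorname{Span}\{B_1,\ldots,B_m\}$ equals $\operatorname{rank} V = \operatorname{rank} G$, so the positive semi-definite matrix $G$ has exactly $d$ positive eigenvalues $\mu_1,\ldots,\mu_d$, and the displayed inequality is nothing but Cauchy--Schwarz $\bigl(\sum_i\mu_i\bigr)^2 \leq d\sum_i\mu_i^2$.

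For the equality case, Cauchy--Schwarz forces $\mu_1 = \cdots = \mu_d = \lambda^2$ for some $\lambda > 0$. Choosing $R \in \operatorname{O}(m)$ so that $R^tGR = \operatorname{diag}(\lambda^2,\ldots,\lambda^2,0,\ldots,0)$, the rotated matrices $\tilde B_r = \sum_j R_{jr}B_j$ have associated vectors $\tilde v_r$ satisfying $\tilde v_r\cdot\tilde v_s = \lambda^2\delta_{rs}$ for $r,s \leq d$ and $\tilde v_r = 0$ for $r > d$. Then picking $Q \in \operatorname{O}(n-1)$ with $Qe_r = \tilde v_r/\lambda$ for $r \leq d$ (completed to an orthogonal matrix arbitrarily) and setting $P = \operatorname{diag}(Q,1) \in \operatorname{O}(n)$ conjugates $\tilde B_r$ to $\lambda(E_{rn} + E_{nr}) = \lambda D_r$ for $r \leq d$ and to $0$ for $r > d$, which is exactly the claimed normal form; the converse is a direct plug-in check. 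The whole argument is short once the vector parametrization of $\mathcal{Q}(n-1,1)$ is in place and the reduction to the Gram matrix is made, so I do not anticipate any serious obstacle beyond tracking factor-of-two normalizations and the identification $\operatorname{rank} G = d$.
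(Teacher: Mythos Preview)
Your proof is correct and follows essentially the same route as the paper: parametrize $B_r$ by a vector $v_r\in\mathbb{R}^{n-1}$, compute $\|[B_r,B_s]\|^2=2\bigl(\|v_r\|^2\|v_s\|^2-(v_r\cdot v_s)^2\bigr)$, and reduce to Cauchy--Schwarz on $d$ nonnegative numbers. The only cosmetic difference is that the paper first rotates by $R\in\operatorname{O}(m)$ to make the $v_r$ mutually orthogonal (so the cross terms vanish) and then applies Cauchy--Schwarz to the lengths $\|v_r\|^2$, whereas you keep the original $v_r$'s, form the Gram matrix $G=V^tV$, and apply Cauchy--Schwarz to its eigenvalues; since that rotation is exactly a diagonalization of $G$, the two arguments are the same computation in different coordinates.
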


As for the proof of classic DDVV inequality,
Theorem \ref{DDVVQ(p,q)} and \ref{DDVVQ(n-1,1)} imply the following normal scalar curvature inequality.

\begin{thm}\label{GeoDDVVQ(p,q)}
Let $M^n$ be an austere submanifold of type $\mathcal{Q}(p,q)$ in a real space form $N^{n+m}(\kappa)$. Then
\begin{equation}\label{GeoineqQ(p,q)}
\rho^{\bot}\leq \frac{\sqrt{2}}{2}(\kappa-\rho).
\end{equation}
The equality holds at some point $p\in M$ if and only if there exist an orthonormal basis $\{e_1,\cdots,e_n\}$ of $T_pM$ and an orthonormal basis $\{\xi_1,\cdots,\xi_m\}$ of $T_p^{\bot}M$ such that
the matrices corresponding to the second fundamental form along directions $\xi_1,\cdots,\xi_m$ are
$$A_1=\lambda\operatorname{diag}(C_1,0),\ A_2=\lambda\operatorname{diag}(C_2,0)\
\mbox{and}\ A_r=0\ \mbox{for}\ r>2,$$
where $\lambda$ is a real number.
\end{thm}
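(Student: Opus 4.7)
The plan is to transport the algebraic inequality of Theorem \ref{DDVVQ(p,q)} to its geometric counterpart \eqref{GeoineqQ(p,q)} by running the standard Gauss--Ricci dictionary, exactly as the classical DDVV inequality \eqref{AlgDDVV)} implies \eqref{GeoDDVV)}. First, I will fix a point $p\in M$ and choose, by the definition of ``type $\mathcal{Q}(p,q)$'', an orthonormal basis $\{e_1,\ldots,e_n\}$ of $T_pM$ with $|\operatorname{II}_p|\subset\mathcal{Q}(p,q)$, together with an arbitrary orthonormal basis $\{\xi_1,\ldots,\xi_m\}$ of $T_p^\bot M$. Writing $A_r$ for the matrix of the shape operator $A_{\xi_r}$ in $\{e_i\}$, I will then have $A_r\in\mathcal{Q}(p,q)$ for every $r$.

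Since austerity implies minimality, the mean curvature vector $H$ vanishes identically, so the Gauss equation in $N^{n+m}(\kappa)$ reduces to $n(n-1)(\kappa-\rho)=\sum_{r}\|A_r\|^2$, while the Ricci equation together with the standard normalization that makes \eqref{GeoDDVV)} equivalent to \eqref{AlgDDVV)} gives $n^2(n-1)^2(\rho^\bot)^2=\sum_{r,s}\|[A_r,A_s]\|^2$. Feeding these identities into Theorem \ref{DDVVQ(p,q)}, which applies because every $A_r$ lies in $\mathcal{Q}(p,q)$, will yield
\[n^2(n-1)^2(\rho^\bot)^2 \;\leq\; \tfrac{1}{2}\Big(\sum_r\|A_r\|^2\Big)^2 \;=\; \tfrac{1}{2}\,n^2(n-1)^2(\kappa-\rho)^2,\]
and taking square roots produces \eqref{GeoineqQ(p,q)} with the sharp constant $\tfrac{\sqrt{2}}{2}$.

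For the equality case, equality in \eqref{GeoineqQ(p,q)} at $p$ forces equality in the algebraic inequality \eqref{ineqQ(p,q)} applied to $(A_1,\ldots,A_m)$. The equality characterization in Theorem \ref{DDVVQ(p,q)} then furnishes a pair $(P,R)\in K(n,m)$ with $(P,R)\cdot(A_1,\ldots,A_m)=(\lambda\operatorname{diag}(C_1,0),\lambda\operatorname{diag}(C_2,0),0,\ldots,0)$. The tangential factor $P$ encodes a change of orthonormal basis on $T_pM$ (its columns form the new basis $\{e_i'\}$), while the normal factor $R$ encodes a rotation of the normal frame ($\xi_r':=\sum_j R_{jr}\xi_j$); in the new pair of orthonormal frames the shape operator matrices are therefore exactly the ones claimed in Theorem \ref{GeoDDVVQ(p,q)}, and the converse is a direct computation. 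The only genuinely nontrivial ingredient -- and hence the main obstacle -- is already packaged into Theorem \ref{DDVVQ(p,q)}; beyond that, I merely need to verify that the $K(n,m)$-action on matrix tuples corresponds precisely to simultaneous changes of tangential and normal orthonormal frames, and to keep track of the normalization of $\rho^\bot$ so that the factor $\tfrac{\sqrt{2}}{2}$ emerges automatically from the $\tfrac{1}{2}$ in \eqref{ineqQ(p,q)}.
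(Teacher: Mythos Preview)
Your proposal is correct and follows essentially the same route as the paper: express $\kappa-\rho$ and $\rho^{\bot}$ through the Gauss and Ricci equations (equations \eqref{DDVVRHS} and \eqref{DDVVLHS} in the paper) and then feed the shape operator matrices $A_r\in\mathcal{Q}(p,q)$ into Theorem~\ref{DDVVQ(p,q)}. Your explicit reading of the $K(n,m)$-action as simultaneous change of tangential and normal orthonormal frames is exactly what the paper leaves implicit when it says that Theorems~\ref{GeoDDVVQ(p,q)} and \ref{GeoDDVVQ(n-1,1)} ``follow from'' the algebraic theorems.
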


\begin{thm}\label{GeoDDVVQ(n-1,1)}
Let $M^n$ be an austere submanifold of type $\mathcal{Q}(n-1,1)$ in a real space form $N^{n+m}(\kappa)$. Then for any point $p\in M$,
\begin{equation}\label{GeoineqQ(n-1,1)}
\rho^{\bot}\leq \sqrt{\frac{d-1}{2d}}(\kappa-\rho),
\end{equation}
where $d$ denotes the dimension of $|\operatorname{II}_p|$.
The equality holds if and only if there exist an orthonormal basis $\{e_1,\cdots,e_n\}$ of $T_pM$ and an orthonormal basis $\{\xi_1,\cdots,\xi_m\}$ of $T_p^{\bot}M$ such that
the matrices corresponding to the second fundamental form along directions $\xi_1,\cdots,\xi_m$ are
$$A_r=\lambda D_r\ \mbox{for}\ 1\leq r\leq d\
\mbox{and}\ A_r=0\ \mbox{for}\ r>d,$$
where $\lambda$ is a real number.
\end{thm}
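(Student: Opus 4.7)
The plan is to reduce the geometric inequality (\ref{GeoineqQ(n-1,1)}) to the algebraic inequality (\ref{ineqQ(n-1,1)}) from Theorem \ref{DDVVQ(n-1,1)} via the Gauss and Ricci equations, mirroring the standard passage from the algebraic DDVV inequality (\ref{AlgDDVV)}) to the geometric DDVV inequality (\ref{GeoDDVV)}).

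First I would record the consequences of the structural equations at the point $p$. Because austerity forces minimality, one has $\|H\|^2=0$, so the Gauss equation yields
$$\kappa-\rho=\frac{1}{n(n-1)}\sum_{r=1}^m\|A_r\|^2,$$
while the Ricci equation, combined with the definition of normalized normal scalar curvature, gives
$$\rho^{\bot}=\frac{1}{n(n-1)}\sqrt{\sum_{r,s=1}^m\|[A_r,A_s]\|^2},$$
for $A_r$ the shape operator in direction $\xi_r$ of any orthonormal frame $\{\xi_1,\ldots,\xi_m\}$ of $T_p^{\bot}M$. Since $M$ is of type $\mathcal{Q}(n-1,1)$, one may choose an orthonormal basis $\{e_1,\ldots,e_n\}$ of $T_pM$ so that all $A_r$ lie in $\mathcal{Q}(n-1,1)$. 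Applying Theorem \ref{DDVVQ(n-1,1)} with $d=\dim|\operatorname{II}_p|=\dim\operatorname{Span}\{A_1,\ldots,A_m\}$ and taking square roots produces
$$\sqrt{\sum_{r,s}\|[A_r,A_s]\|^2}\leq\sqrt{\tfrac{d-1}{2d}}\sum_r\|A_r\|^2,$$
which, inserted into the two identities above, is exactly (\ref{GeoineqQ(n-1,1)}).

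For the equality discussion I would invoke the equality clause of Theorem \ref{DDVVQ(n-1,1)}: if (\ref{GeoineqQ(n-1,1)}) is attained at $p$, there is $(P,R)\in K(n,m)$ sending $(A_1,\ldots,A_m)$ to $(\lambda D_1,\ldots,\lambda D_d,0,\ldots,0)$. Interpreting $P\in\operatorname{O}(n)$ as the change of orthonormal basis of $T_pM$ from $\{e_i\}$ to $\{\tilde e_i=\sum_j P_{ji}e_j\}$, and $R\in\operatorname{O}(m)$ as the change of orthonormal basis of $T_p^{\bot}M$ from $\{\xi_r\}$ to $\{\tilde\xi_r=\sum_sR_{sr}\xi_s\}$, one sees that the shape operators with respect to the new bases are precisely $A_{\tilde\xi_r}=\lambda D_r$ for $1\le r\le d$ and zero otherwise. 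The converse direction is a straightforward verification: substituting these explicit $A_r$ into the identities above forces equality in both the algebraic inequality and hence in (\ref{GeoineqQ(n-1,1)}).

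Because the algebraic work has already been done in Theorem \ref{DDVVQ(n-1,1)}, no genuinely hard step remains here. The only point requiring care is bookkeeping: verifying that the $K(n,m)=\operatorname{O}(n)\times\operatorname{O}(m)$ action on matrix tuples corresponds exactly to simultaneous orthonormal changes of frame in $T_pM$ and $T_p^{\bot}M$, and that the candidate equality matrices $D_r=E_{rn}+E_{nr}$ indeed belong to $\mathcal{Q}(n-1,1)$ (which follows from $PD_r+D_rP=0$ for $P=\operatorname{diag}(I_{n-1},-1)$), so the equality configuration is consistent with the type hypothesis on $M$.
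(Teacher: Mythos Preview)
Your proposal is correct and follows essentially the same approach as the paper: reduce the geometric inequality to the algebraic one of Theorem~\ref{DDVVQ(n-1,1)} via the Gauss and Ricci identities (\ref{DDVVRHS}) and (\ref{DDVVLHS}), then read off the equality condition from that theorem. Your added bookkeeping on interpreting the $K(n,m)$-action as a simultaneous change of tangent and normal frames is a useful elaboration of a step the paper leaves implicit.
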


In order to illustrate that the above theorems make sense,
we give some explicit examples of austere submanifold of type $\mathcal{Q}(p,q)$,
including Bryant's generalized helicoids, two austere embeddings of Grassmann manifolds $\operatorname{G}_2(\mathbb{R}^4)$ and $\operatorname{G}_2(\mathbb{C}^4)$,
and some isoparametric focal submanifolds satisfying Condition A in unit spheres.
In fact, among focal submanifolds of isoparametric hypersurfaces with $4$ distinct principal curvatures,
we find that the focal submanifolds $M_+$ with multiplicities $(1, 2k)$ and
the focal submanifold $M_-$ with multiplicities $(4, 3)$(definite)
are austere submanifolds of type $\mathcal{Q}(p, q)$ for some suitable integers $p, q$.
It is worth pointing out that the focal submanifold $M_+$ with multiplicities $(1, 2)$
achieves the equality in Theorem \ref{GeoDDVVQ(p,q)}.

Closely related to the DDVV inequality (\ref{AlgDDVV)}), B\"ottcher-Wenzel \cite{BW05} raised the so-called BW conjecture: Let $X, Y$ be arbitrary $n\times n$ real matrices, then
\begin{equation}\label{BWineq}
\|\[X,Y\]\|^2\leq 2\|X\|^2\|Y\|^2.
\end{equation}
The BW inequality (\ref{BWineq}) was proved by B\"{o}ttcher-Wenzel \cite{BW08}, Vong-Jin \cite{VJ08}, Audenaert \cite{AKMR09} and Lu \cite{Lu11, Lu12} in various ways.
There are also many generalizations \cite{BW08, GLZ21, GLLZ20} and equality characterizations \cite{CVW10}
of BW-type inequalities.
In fact, the BW-type inequality for symmetric matrices is a core technology in the proof of
the Simons' integral inequality by Chern-do Carmo-Kobayashi \cite{CDK}.
Since we can prove a sharper BW-type inequality (Lemma \ref{BWQ(p,q)}) for matrices in $\mathcal{Q}(p,q)$,
we can also establish a Simons-type gap theorem (Theorem \ref{Simons}) for closed austere submanifolds of type $\mathcal{Q}(p,q)$ in unit spheres.

The rest of this paper is organized as follows.
In Section \ref{sec2}, we give the proofs of Theorem \ref{DDVVQ(p,q)}-\ref{GeoDDVVQ(n-1,1)}.
In Section \ref{sec3}, we introduce some austere submanifolds of type $\mathcal{Q}(p,q)$.
At last, in Section \ref{sec4}, we prove the Simons-type gap theorem (Theorem \ref{Simons}).

\section{DDVV-type Inequality and Normal Scalar Curvature Inequality}\label{sec2}
\subsection{Preparatory lemmas}\label{sec2.1}
In this subsection, we will make some preparations for proving Theorem \ref{DDVVQ(p,q)}.
We recall that $\mathcal{Q}(p,q)$ is a real vector space of dimension $pq=:N$.
Without loss of generality, we may assume that $p\geq q$ in the proof of Theorem \ref{DDVVQ(p,q)}.
In order to construct an orthonormal basis of $\mathcal{Q}(p,q)$, we consider the index set
$$S:=\{(i,j): 1\leq i\leq p,\ p+1\leq j\leq n\}.$$
For every $(i,j)\in S$, let
$$\hat{E}_{ij}:=\frac1{\sqrt{2}}(E_{ij}+E_{ji}),$$
where $E_{ij}$ is the $n\times n$ matrix with $1$ in position $(i,j)$ and $0$ elsewhere.
Then $\{\hat{E}_{ij}\}$ is an orthonormal basis of $\mathcal{Q}(p,q)$.
Next, we put an order on the index set $S$ by
\begin{equation}\label{eqno1}
(i,j)<(k,l)\ \text{if and only if}\ i<k\ \text{or}\ i=k, j<l,
\end{equation}
and use this order to identify an index $(i,j)\in S$ with a single (Greek) index
$\alpha\in\{1,\cdots,N\}$, i.e., $(i,j)\ \hat{=}\ \alpha=(i-1)q+(j-p)$.

For $\alpha\ \hat{=}\ (i,j)\leq \beta\ \hat{=}\ (k,l)$ in $S$, direct calculations imply
\begin{equation}\label{eqno2}
\|[\hat{E}_\alpha,\hat{E}_\beta]\|^2=
\begin{cases}
\frac12   &\text{if}\ i=k<j<l\ \text{or}\ i<k<j=l;\\
0         &\text{otherwise},
\end{cases}
\end{equation}
and
\begin{equation}\label{eqno3}
\sum_{\gamma\in S}\<[\hat{E}_\alpha, \hat{E}_\gamma],[\hat{E}_\beta,\hat{E}_\gamma]\>
=\frac12(n-2)\delta_{ik}\delta_{jl},
\end{equation}
where $\<A,B\>:=\operatorname{tr}\(AB^t\)$ denotes the standard inner product.

Suppose $\{\hat{Q}_\alpha\}_{\alpha\in S}$ be any orthonormal basis of $\mathcal{Q}(p,q)$. There exists a unique orthogonal matrix $Q\in\operatorname{O}(N)$ such that
\begin{equation}\label{Qeq}
(\hat{Q}_1, \cdots, \hat{Q}_N)=(\hat{E}_1, \cdots, \hat{E}_N)Q,
\end{equation}
i.e., $\hat{Q}_\alpha=\sum_\beta q_{\beta\alpha}\hat{E}_\beta$ for $Q=(q_{\alpha\beta})_{N\times N}$, and if $\hat{Q}_\alpha=(\hat{q}_{ij}^\alpha)_{n\times n}$, then
\begin{equation}\label{qalpha}
\hat{q}_{ij}^\alpha=\hat{q}_{ji}^\alpha=
\begin{cases}
\frac1{\sqrt{2}}q_{\gamma\alpha}   &\text{if}\ (i,j)\in S;\\
0         &\text{otherwise},
\end{cases}
\end{equation}
where $\gamma\hat{=}(i,j)$.
It follows from (\ref{eqno3}, \ref{Qeq}) that
\begin{equation}\label{lem4}
\sum_{\beta\in S}\|[\hat{Q}_\alpha,\hat{Q}_\beta]\|^2=\frac12(n-2)
\end{equation}
for any $Q\in\operatorname{O}(N)$ and any $\alpha\in S$.

In the last part of this subsection, we prove some technical lemmas.
Let $\lambda_1, \cdots, \lambda_q$ be real numbers satisfying $\sum_{i=1}^q\lambda_i^2=\frac12$ and $\lambda_1\geq\cdots\geq\lambda_q\geq0$.
Let $$I:=\left\{(i,j): 1\leq i< j\leq q,\ \lambda_i+\lambda_j>\frac1{\sqrt{2}}\right\},$$
and let $n_0$ be the number of elements in $I$.

\begin{lem}\label{lem1}
If $n_0\geq1$, then $I=\{(1,j): 2\leq j\leq n_0+1\}$ or $\{(1,2), (1,3), (2,3)\}$.
\end{lem}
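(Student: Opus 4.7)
The plan is to first show that any two distinct pairs in $I$ must share an index, then classify pairwise intersecting families of 2-element subsets of $\{1,\ldots,q\}$, and finally use the ordering of the $\lambda_i$ to pin down the exact shape of $I$ in each case.

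For the first step, suppose $(i,j)$ and $(k,l)$ lie in $I$ with $\{i,j\}\cap\{k,l\}=\emptyset$. Squaring the defining inequalities gives $(\lambda_i+\lambda_j)^2+(\lambda_k+\lambda_l)^2>1$. On the other hand, Cauchy-Schwarz (or AM-QM) yields $(\lambda_i+\lambda_j)^2\leq 2(\lambda_i^2+\lambda_j^2)$, so the same left-hand side is at most $2\sum_m\lambda_m^2=1$, a contradiction. Hence any two distinct pairs in $I$ share a common index.

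Next, I would invoke the elementary combinatorial fact that a pairwise intersecting family $\mathcal{F}$ of 2-element subsets of $\{1,\ldots,q\}$ is either a ``star'' (all members containing a single fixed index) or precisely a ``triangle'' $\{\{a,b\},\{a,c\},\{b,c\}\}$ for some $a<b<c$. Indeed, if $\mathcal{F}$ is not a star, pick two members sharing an element $a$; a third member avoiding $a$ must equal $\{b,c\}$, and any further member would need to meet all three of $\{a,b\},\{a,c\},\{b,c\}$, forcing it to contain two of $a,b,c$ and hence coincide with one already listed.

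It then remains to fix the specific indices in each case via the monotonicity $\lambda_1\geq\cdots\geq\lambda_q\geq 0$. For the triangle $\{(a,b),(a,c),(b,c)\}$, since $\lambda_1\geq\lambda_a$ one has $\lambda_1+\lambda_b\geq\lambda_a+\lambda_b>\frac{1}{\sqrt{2}}$, so $(1,b)\in I$; if $a>1$ this is a fourth element of $I$, contradicting $|I|=3$, and so $a=1$. The same index-lowering trick applied to $b$ and then $c$ forces $b=2$ and $c=3$. For the star case, one shows the center must be $1$: if the putative center $i_0>1$ admitted two ``below'' pairs $(j_1,i_0),(j_2,i_0)$ with $j_1<j_2<i_0$, then $\lambda_{j_1}+\lambda_{j_2}\geq\lambda_{j_2}+\lambda_{i_0}>\frac{1}{\sqrt{2}}$ would produce $(j_1,j_2)\in I$ and destroy the star, while an ``above'' pair $(i_0,k)$ with $k>i_0$ combined with $\lambda_1\geq\lambda_{i_0}$ would give $(1,k)\in I$ which does not contain $i_0$, again destroying the star. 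Once the center is $1$, whenever $(1,j_0)\in I$ we have $\lambda_1+\lambda_j\geq\lambda_1+\lambda_{j_0}>\frac{1}{\sqrt{2}}$ for every $2\leq j\leq j_0$, so $(1,j)\in I$, giving $I=\{(1,j):2\leq j\leq n_0+1\}$.

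The main hurdle is the first step, where the constraint $\sum_m\lambda_m^2=\frac{1}{2}$ must be leveraged just tightly enough through Cauchy-Schwarz to rule out disjoint pairs in $I$; once that combinatorial skeleton is secured, everything else is a short case analysis powered by the monotonicity of the $\lambda_i$.
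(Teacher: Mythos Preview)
Your proof is correct and follows essentially the same approach as the paper: both hinge on showing that any two pairs in $I$ must share an index (via the same Cauchy--Schwarz/AM--QM contradiction on disjoint pairs), and then use the monotonicity $\lambda_1\geq\cdots\geq\lambda_q$ to pin down the exact indices. The paper's write-up is more compressed---it directly observes that $(1,2),(1,3)\in I$ whenever $n_0\geq 2$ and finishes from there---whereas you route through the general star/triangle classification of pairwise-intersecting $2$-subset families before applying monotonicity, but the underlying argument is the same.
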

\begin{proof}
If $n_0=1$, then the single element must be $(1,2)$.
If $n_0\geq2$, let $(i_1,j_1)$ and $(i_2,j_2)$ be two different elements of $I$.
We assert that $i_1$, $j_1$, $i_2$ and $j_2$ would not be four distinct elements.
Otherwise,
$$\frac12\geq\lambda^2_{i_1}+\lambda^2_{j_1}+\lambda^2_{i_2}+\lambda^2_{j_2}\geq
\frac{1}2(\lambda_{i_1}+\lambda_{j_1})^2+\frac{1}2(\lambda_{i_2}+\lambda_{j_2})^2>\frac12$$
is a contradiction.
Since we must have $(1,2), (1,3)\in I$, the above assertion implies either $I=\{(1,j): 2\leq j\leq n_0+1\}$ or $I=\{(1,2), (1,3), (2,3)\}$.
\end{proof}

\begin{lem}\label{lem2}
We have $\sum_{(i,j)\in I}\[(\lambda_i+\lambda_j)^2-\frac12\]\leq\frac12,$ where the equality holds
if and only if $n_0=1$, $\lambda_1=\lambda_2=\frac12, \lambda_3=\cdots=\lambda_q=0$ or
$n_0=3$, $\lambda_1=\lambda_2=\lambda_3=\frac1{\sqrt{6}}, \lambda_4=\cdots=\lambda_q=0$.
\end{lem}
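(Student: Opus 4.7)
The plan is to use the dichotomy supplied by Lemma \ref{lem1}: either $I$ has the star form $\{(1,j):2\leq j\leq n_0+1\}$, or $I$ is the triangle $\{(1,2),(1,3),(2,3)\}$. In each subcase I would expand the target sum, majorize it by a double Cauchy--Schwarz argument, and then use the normalization $\sum_i\lambda_i^2=\frac{1}{2}$ to reach the bound $\frac{1}{2}$.

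In the star subcase, introduce $S_1:=\sum_{j=2}^{n_0+1}\lambda_j$ and $S_2:=\sum_{j=2}^{n_0+1}\lambda_j^2$. The target rewrites as
\begin{equation*}
\sum_{(i,j)\in I}\bigl[(\lambda_i+\lambda_j)^2-\tfrac{1}{2}\bigr]=n_0\lambda_1^2+2\lambda_1 S_1+S_2-\tfrac{n_0}{2}.
\end{equation*}
A first Cauchy--Schwarz gives $S_1\leq\sqrt{n_0 S_2}$, so $n_0\lambda_1^2+2\lambda_1 S_1+S_2\leq(\sqrt{n_0}\lambda_1+\sqrt{S_2})^2$; a second Cauchy--Schwarz applied to the vectors $(\sqrt{n_0},1)$ and $(\lambda_1,\sqrt{S_2})$ yields $(\sqrt{n_0}\lambda_1+\sqrt{S_2})^2\leq(n_0+1)(\lambda_1^2+S_2)\leq(n_0+1)/2$, where the last step uses $\lambda_1^2+S_2\leq\sum_i\lambda_i^2=\frac{1}{2}$. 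Subtracting $n_0/2$ then closes the bound.

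In the triangle subcase I would use the algebraic identity
\begin{equation*}
\sum_{(i,j)\in I}(\lambda_i+\lambda_j)^2=(\lambda_1+\lambda_2+\lambda_3)^2+(\lambda_1^2+\lambda_2^2+\lambda_3^2),
\end{equation*}
together with the two bounds $(\lambda_1+\lambda_2+\lambda_3)^2\leq 3(\lambda_1^2+\lambda_2^2+\lambda_3^2)\leq \frac{3}{2}$, to see that the sum is at most $\frac{3}{2}+\frac{1}{2}-\frac{3}{2}=\frac{1}{2}$. For the equality characterization I would trace back the saturations of each Cauchy--Schwarz step together with the sum-of-squares normalization, which force the $\lambda_j$ appearing in $I$ to be equal in a precise proportion and the remaining $\lambda_j$ to vanish; then I would verify which resulting configurations are compatible with the strict defining inequality $\lambda_i+\lambda_j>1/\sqrt{2}$ of $I$ for the associated $n_0$. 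The main obstacle I anticipate is precisely this last compatibility check, which is the delicate step that pins down exactly which extremal configurations survive and ultimately yields the two cases stated.
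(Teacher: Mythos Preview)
Your argument for the inequality is correct and essentially identical to the paper's: the same case split via Lemma~\ref{lem1}, and in each branch Cauchy--Schwarz-type estimates lead to the bound $(n_0+1)\big(\lambda_1^2+\sum_{j=2}^{n_0+1}\lambda_j^2\big)\le(n_0+1)/2$ in the star case and $4\sum_{i=1}^3\lambda_i^2\le 2$ in the triangle case. Your two successive Cauchy--Schwarz steps simply repackage the paper's use of $2\lambda_1S_1\le\lambda_1^2+S_1^2$ followed by $S_1^2\le n_0S_2$; the resulting saturation conditions coincide exactly.

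There is a genuine obstacle in the equality direction, however, and it is not merely the compatibility check you flag. Tracing back the saturations in the star case yields $\lambda_2=\cdots=\lambda_{n_0+1}$, $\lambda_1=n_0\lambda_2$, and $\lambda_j=0$ for $j>n_0+1$; for \emph{every} $n_0\ge 1$ this configuration is consistent with $I=\{(1,j):2\le j\le n_0+1\}$ and attains the value $\tfrac12$ (for instance $n_0=2$, $\lambda_1=\tfrac{1}{\sqrt3}$, $\lambda_2=\lambda_3=\tfrac{1}{2\sqrt3}$, where one checks directly that the sum equals $\tfrac12$ and $I$ has exactly these two elements). So the compatibility check you anticipate will \emph{not} prune down to only the two listed cases; the ``only if'' clause as stated admits further solutions. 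The paper's own proof just writes ``easily seen from the proof'' at this point, so this gap is shared with the paper rather than peculiar to your argument.
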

\begin{proof}
By Lemma \ref{lem1}, we divide the proof into two cases:

$(1)$ If $I=\{(1,j): 2\leq j\leq n_0+1\}$, then
$$\begin{aligned}
\sum_{(i,j)\in I}\left[(\lambda_i+\lambda_j)^2-\frac12\right]
&=\sum_{j=2}^{n_0+1}(\lambda^2_1+\lambda^2_j+2\lambda_1\lambda_j)-\frac12n_0\\
&=n_0\lambda^2_1+\sum_{j=2}^{n_0+1}\lambda^2_j+2\lambda_1\sum_{j=2}^{n_0+1}\lambda_j-\frac12n_0\\
&\leq(n_0+1)\lambda^2_1+\sum_{j=2}^{n_0+1}\lambda^2_j+\bigg(\sum_{j=2}^{n_0+1}\lambda_j\bigg)^2-\frac12n_0\\
&\leq(n_0+1)\Big(\lambda^2_1+\sum_{j=2}^{n_0+1}\lambda^2_j\Big)-\frac12n_0\\
&\leq(n_0+1)\sum^{q}_{i=1}\lambda^2_i-\frac12n_0\\
&=\frac12(n_0+1)-\frac12n_0=\frac12.
\end{aligned}$$

$(2)$ If $I=\{(1,2), (1,3), (2,3)\}$, then
$$\begin{aligned}
\sum_{(i,j)\in I}\left[(\lambda_i+\lambda_j)^2-\frac12\right]
&=\sum_{(i,j)\in I}(\lambda_i+\lambda_j)^2-\frac32
\leq2\sum_{(i,j)\in I}(\lambda^2_i+\lambda^2_j)-\frac32\\
&=4\sum_{i=1}^3\lambda^2_i-\frac32\leq2-\frac32=\frac12.
\end{aligned}$$
The equality condition is easily seen from the proof.
\end{proof}

\begin{lem}\label{lem3}
For any $Q\in\operatorname{O}(N)$, given any $\alpha\in S$ and any subset $J_\alpha\subset S$, we have $$\sum_{\beta\in{J}_\alpha}\bigg(\|[\hat{Q}_\alpha,\hat{Q}_\beta]\|^2-\frac12\bigg)\leq\frac12.$$
\end{lem}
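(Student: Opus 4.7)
The plan is to realize the desired bound as a consequence of Ky Fan's maximum principle applied to the self-adjoint, positive semi-definite operator $T$ on $\mathcal{Q}(p,q)$ defined by $T(X):=[\hat{Q}_\alpha,[\hat{Q}_\alpha,X]]$, together with Lemma \ref{lem2}.

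First I would exploit the block-diagonal orthogonal action of $\operatorname{O}(p)\times\operatorname{O}(q)$: conjugation by $P=\operatorname{diag}(U,V)$ with $U\in\operatorname{O}(p)$, $V\in\operatorname{O}(q)$ preserves $\mathcal{Q}(p,q)$, the Frobenius inner product, and commutators. Writing $\hat{Q}_\alpha=\left(\begin{smallmatrix}0 & D\\ D^t & 0\end{smallmatrix}\right)$ with $D\in M(p,q)$ and applying the singular value decomposition to $D$, I may normalize
$$\hat{Q}_\alpha=\sum_{i=1}^{q}\sqrt{2}\,\lambda_i\,\hat{E}_{i,p+i},\qquad \lambda_1\geq\cdots\geq\lambda_q\geq 0,\quad \sum_i\lambda_i^2=\tfrac12,$$
while the remaining $\hat{Q}_\beta$ ($\beta\neq\alpha$) continue to form an orthonormal basis of $\mathcal{Q}(p,q)$ and the quantities $\|[\hat{Q}_\alpha,\hat{Q}_\beta]\|$ are unchanged.

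Next, using the bracket formula and (\ref{qalpha}), I would verify that the map $X\mapsto[\hat{Q}_\alpha,X]$ decomposes orthogonally into: (i) a zero block on each $\operatorname{span}\{\hat{E}_{a,p+a}\}$, for $1\leq a\leq q$; (ii) a two-dimensional block on $\operatorname{span}\{\hat{E}_{a,p+b},\hat{E}_{b,p+a}\}$ for each $1\leq a<b\leq q$, represented in suitable orthonormal bases of domain and codomain by the symmetric matrix $\left(\begin{smallmatrix}\lambda_a & -\lambda_b\\ -\lambda_b & \lambda_a\end{smallmatrix}\right)$ whose squared singular values are $(\lambda_a\pm\lambda_b)^2$; and (iii) a one-dimensional block on each $\operatorname{span}\{\hat{E}_{k,p+a}\}$ with $q<k\leq p$, $1\leq a\leq q$, with squared singular value $\lambda_a^2$. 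Thus the full spectrum of $T$ consists of $q$ copies of $0$, the numbers $(\lambda_a\pm\lambda_b)^2$ for $1\leq a<b\leq q$, and each $\lambda_a^2$ with multiplicity $p-q$, which together sum to $\tfrac12(n-2)$, consistent with (\ref{lem4}). The crucial observation is that $\lambda_a^2\leq\tfrac12$ and $(\lambda_a-\lambda_b)^2\leq\lambda_a^2+\lambda_b^2\leq\tfrac12$, so the only eigenvalues of $T$ that can strictly exceed $\tfrac12$ are precisely $(\lambda_a+\lambda_b)^2$ with $(a,b)\in I$.

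Finally, since $\|[\hat{Q}_\alpha,\hat{Q}_\beta]\|^2=\langle T\hat{Q}_\beta,\hat{Q}_\beta\rangle$ and $\{\hat{Q}_\beta\}_{\beta\in J_\alpha}$ is an orthonormal set in $\mathcal{Q}(p,q)$, Ky Fan's maximum principle yields $\sum_{\beta\in J_\alpha}\|[\hat{Q}_\alpha,\hat{Q}_\beta]\|^2\leq\sum_{i=1}^{|J_\alpha|}\mu_i$, where $\mu_1\geq\mu_2\geq\cdots$ are the eigenvalues of $T$ in decreasing order. Subtracting $\tfrac12|J_\alpha|$ and noting that $\sum_{i=1}^{k}(\mu_i-\tfrac12)$ is maximized over $k$ by retaining only indices with $\mu_i>\tfrac12$, the left-hand side of the lemma is bounded by $\sum_{(a,b)\in I}\bigl[(\lambda_a+\lambda_b)^2-\tfrac12\bigr]$, which is at most $\tfrac12$ by Lemma \ref{lem2}. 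The main technical obstacle is the orthogonal block decomposition of $X\mapsto[\hat{Q}_\alpha,X]$ that identifies the spectrum of $T$; this relies crucially on the $\mathcal{Q}(p,q)$-compatible SVD normalization of the first step, after which Ky Fan's principle and Lemma \ref{lem2} close the argument cleanly.
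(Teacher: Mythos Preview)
Your proof is correct and takes a genuinely different, more structural route than the paper. Both arguments begin with the same $\operatorname{O}(p)\times\operatorname{O}(q)$ normalization of $\hat{Q}_\alpha$ via SVD, and both ultimately reduce to Lemma~\ref{lem2}. The difference lies in the middle step: the paper expands $\|[\hat{Q}_\alpha,\hat{Q}_\beta]\|^2-\tfrac12$ entrywise (after a second conjugation by $P_2$), drops the nonpositive contributions from the symmetric part $X_\beta$ and the $c_\beta$-block, and then uses the coordinate identity $\sum_{\beta\in S}(y_{ij}^\beta)^2=\tfrac14$ coming from $Q\in\operatorname{O}(N)$ to bound the remaining skew part. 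You instead recognize $\|[\hat{Q}_\alpha,\hat{Q}_\beta]\|^2=\langle T\hat{Q}_\beta,\hat{Q}_\beta\rangle$ for the positive self-adjoint operator $T=\operatorname{ad}_{\hat{Q}_\alpha}^2$ on $\mathcal{Q}(p,q)$, diagonalize $T$ explicitly, and invoke Ky Fan's maximum principle; the observation that only the eigenvalues $(\lambda_a+\lambda_b)^2$ with $(a,b)\in I$ exceed $\tfrac12$ then yields exactly the same expression $\sum_{(a,b)\in I}[(\lambda_a+\lambda_b)^2-\tfrac12]$. Your argument is cleaner and more conceptual; the paper's hands-on computation has the advantage that its intermediate inequalities (\ref{ineq-lem3-1})--(\ref{ineq-lem3}) record precisely which entries of $\hat{Q}_\beta$ must vanish at equality, information the paper later exploits in the equality analysis of Theorem~\ref{DDVVQ(p,q)}. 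The same equality information is of course recoverable from the equality case of Ky Fan (the $\hat{Q}_\beta$ must span the top eigenspace of $T$), but that would require a little extra unpacking.
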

\begin{proof}
Since the above inequality is invariant under orthogonal congruences,
we can transform $\hat{Q}_\alpha$ into an appropriate form.
For the matrix $\hat{Q}_\alpha=\begin{pmatrix}
0 & a  \\
a^t & 0 \\
\end{pmatrix}$,
there exist orthogonal matrices $U\in\operatorname{O}(p)$ and $V\in\operatorname{O}(q)$ such that
$UaV=\begin{pmatrix}
D  \\
0 \\
\end{pmatrix}$, where $D=\operatorname{diag}(\lambda_1, \cdots, \lambda_q)$ with $\sum_{i=1}^q\lambda_i^2=\frac12$ and $\lambda_1\geq\cdots\geq\lambda_q\geq0$.
If we take $P_1=\operatorname{diag}(U^t, V)\in\operatorname{O}(n)$,
then $$P_1^t\hat{Q}_\alpha P_1=\begin{pmatrix}
0 & 0 & D \\
0 & 0 & 0 \\
D & 0 & 0 \\
\end{pmatrix},$$
and $(P_1^t\hat{Q}_1P_1, \cdots, P_1^t\hat{Q}_NP_1)$ is still an orthonormal basis of $\mathcal{Q}(p,q)$.
Hence we can assume without loss of generality that
$$\hat{Q}_\alpha=\begin{pmatrix}
0 & 0 & D \\
0 & 0 & 0 \\
D & 0 & 0 \\
\end{pmatrix},\
\hat{Q}_\beta=\begin{pmatrix}
0 & 0 & b_\beta \\
0 & 0 & c_\beta \\
b_\beta^t & c_\beta^t & 0 \\
\end{pmatrix}\ \mbox{for}\ \beta\neq\alpha,$$
where $b_\beta=\big(b_{ij}^\beta\big)_{q\times q}$, $c_\beta=\big(c_{ij}^\beta\big)_{(p-q)\times q}$.
Next, take $$P_2=\begin{pmatrix}
\frac1{\sqrt{2}}I_q & 0 & \frac1{\sqrt{2}}I_q \\
0 & I_{p-q} & 0 \\
\frac1{\sqrt{2}}I_q & 0 & -\frac1{\sqrt{2}}I_q \\
\end{pmatrix}\in\operatorname{O}(n).$$
Then we have $$P_2^t\hat{Q}_\alpha P_2=\begin{pmatrix}
D & 0 & 0 \\
0 & 0 & 0 \\
0 & 0 & -D \\
\end{pmatrix},\
P_2^t\hat{Q}_\beta P_2=\begin{pmatrix}
X_\beta & \frac1{\sqrt{2}}c^t_\beta & Y^t_\beta \\
\frac1{\sqrt{2}}c_\beta & 0 & -\frac1{\sqrt{2}}c_\beta \\
Y_\beta & -\frac1{\sqrt{2}}c^t_\beta & -X_\beta \\
\end{pmatrix},$$
where $X_\beta=\frac12(b_\beta+b_\beta^t)$ and $Y_\beta=\frac12(b_\beta-b_\beta^t)$
denote the symmetric and skew-symmetric parts of $b_\beta$, respectively.
Direct calculations show that
$$[P_2^t\hat{Q}_\alpha P_2,P_2^t\hat{Q}_\beta P_2]=\begin{pmatrix}
\[D, X_\beta\] & \frac1{\sqrt{2}}Dc^t_\beta & DY^t_\beta+Y^t_\beta D \\
-\frac1{\sqrt{2}}c_\beta D & 0 & -\frac1{\sqrt{2}}c_\beta D  \\
DY^t_\beta+Y^t_\beta D & \frac1{\sqrt{2}}Dc^t_\beta & \[D, X_\beta\] \\
\end{pmatrix}$$
and thus
$$\begin{aligned}
\|[\hat{Q}_\alpha,\hat{Q}_\beta]\|^2
&=\|[P_2^t\hat{Q}_\alpha P_2,P_2^t\hat{Q}_\beta P_2]\|^2=
2\left\|\[D, X_\beta\]\right\|^2+2\left\|DY_\beta+Y_\beta D\right\|^2+2\left\|c_\beta D\right\|^2\\
&=2\sum_{i,j=1}^q(\lambda_i-\lambda_j)^2\big(x_{ij}^\beta\big)^2
+2\sum_{i,j=1}^q(\lambda_i+\lambda_j)^2\big(y_{ij}^\beta\big)^2
+2\sum_{i=1}^{p-q}\sum_{j=1}^q\lambda_j^2\big(c_{ij}^\beta\big)^2
\end{aligned}$$
for $X_\beta=(x_{ij}^\beta\big)_{q\times q}$, $Y_\beta=(y_{ij}^\beta\big)_{q\times q}$.
It is easy to see that
$$\begin{aligned}
\frac12&=\left\|b_\beta\right\|^2+\left\|c_\beta\right\|^2
=\left\|X_\beta\right\|^2+\left\|Y_\beta\right\|^2+\left\|c_\beta\right\|^2\\
&=\sum_{i,j=1}^q\big(x_{ij}^\beta\big)^2+\sum_{i,j=1}^q\big(y_{ij}^\beta\big)^2
+\sum_{i=1}^{p-q}\sum_{j=1}^q\big(c_{ij}^\beta\big)^2,
\end{aligned}$$
and hence
$$\begin{aligned}
\|[\hat{Q}_\alpha,\hat{Q}_\beta]\|^2-\frac12
&=2\sum_{i,j=1}^q\[(\lambda_i-\lambda_j)^2-\frac12\]\big(x_{ij}^\beta\big)^2
+2\sum_{i,j=1}^q\[(\lambda_i+\lambda_j)^2-\frac12\]\big(y_{ij}^\beta\big)^2\\
&\quad+2\sum_{i=1}^{p-q}\sum_{j=1}^q\Big(\lambda_j^2-\frac12\Big)\big(c_{ij}^\beta\big)^2.
\end{aligned}$$
Since $\sum_{i=1}^q\lambda_i^2=\frac12$ and $\lambda_1\geq\cdots\geq\lambda_q\geq0$
imply that $(\lambda_i-\lambda_j)^2-\frac12\leq0$ and $\big(\lambda_j^2-\frac12\big)\leq0$
for any $1\leq i, j\leq q$, we have
\begin{equation}\label{ineq-lem3-1}
\|[\hat{Q}_\alpha,\hat{Q}_\beta]\|^2-\frac12
\leq4\sum_{(i,j)\in I}\[(\lambda_i+\lambda_j)^2-\frac12\]\big(y_{ij}^\beta\big)^2.
\end{equation}
By (\ref{qalpha}), $y_{ij}^\beta=\frac12\big(b_{ij}^\beta-b_{ji}^\beta\big)
=\frac12\big(\hat{q}_{i(j+p)}^\beta-\hat{q}_{j(i+p)}^\beta\big)
=\frac1{2\sqrt{2}}\(q_{\gamma\beta}-q_{\tau\beta}\),$
where $\gamma\hat{=}(i,j+p)$, $\tau\hat{=}(j,i+p)$.
Then $Q\in\operatorname{O}(N)$ implies that for any $1\leq i< j\leq q$,
\begin{equation}\label{ineq-lem3-2}
\sum_{\beta\in J_\alpha}\big(y_{ij}^\beta\big)^2
\leq\sum_{\beta\in S}\big(y_{ij}^\beta\big)^2
=\frac18\sum_{\beta\in S}\(q_{\gamma\beta}^2+q_{\tau\beta}^2-2q_{\gamma\beta}q_{\tau\beta}\)=\frac14.
\end{equation}
Combining with (\ref{ineq-lem3-1}) and Lemma \ref{lem2}, we have
\begin{equation}\label{ineq-lem3}
\begin{aligned}
\sum_{\beta\in J_\alpha}\(\|[\hat{Q}_\alpha,\hat{Q}_\beta]\|^2-\frac12\)
&\leq4\sum_{\beta\in J_\alpha}\sum_{(i,j)\in I}\[(\lambda_i+\lambda_j)^2-\frac12\]
\big(y_{ij}^\beta\big)^2\\
&=4\sum_{(i,j)\in I}\[(\lambda_i+\lambda_j)^2-\frac12\]
\sum_{\beta\in J_\alpha}\big(y_{ij}^\beta\big)^2\\
&\leq\sum_{(i,j)\in I}\[(\lambda_i+\lambda_j)^2-\frac12\]\leq\frac12.
\end{aligned}
\end{equation}
\end{proof}


\subsection{Proof of Theorem \ref{DDVVQ(p,q)}}\label{sec2.3}
In this subsection, we are going to prove Theorem \ref{DDVVQ(p,q)} by Ge-Tang's method \cite{GT08}.
The first step is to transform the DDVV-type inequality to the inequality (\ref{DDVVtransf})
of quadratic forms.
To do this, we need to use the following ``multiplicative" map as in \cite{Ge14, GT08, GXYZ17}.
Let $\varphi:M(m,n)\longrightarrow M(\binom{m}{2},\binom{n}{2})$ be the map defined by $\varphi\(A\)_{\(i,j\)\(k,l\)}:=A\binom{kl}{ij}$, where $1\leq i<j\leq m$, $1\leq k<l\leq n$, and $A\binom{kl}{ij}=a_{ik}a_{jl}-a_{il}a_{jk}$ is the discriminant of the $2\times2$ submatrix of $A$,
that is the intersection of rows $i$ and $j$ with columns $k$ and $l$,
arranged with the same order as in (\ref{eqno1}). One can verify directly that $\varphi\(I_n\)=I_{\binom{n}{2}}$, $\varphi\(A\)^t=\varphi\(A^t\)$, and
$\varphi(AB)=\varphi(A)\varphi(B)$ holds for any $A\in M(m,k)$ and any $B\in M(k,n)$.

Let $B_1,\cdots,B_m\in\mathcal{Q}(p,q)$ with $p+q=n\geq3$.
(If $n=2$, then $\operatorname{dim}\mathcal{Q}(p,q)=1$ and thus inequality (\ref{ineqQ(p,q)}) is trivial.)
Their coefficients in the standard basis $\{\hat{E}_\alpha: \alpha\in S\}$ of $\mathcal{Q}(p,q)$ are determined by a matrix $B\in M\(N,m\)$ as
\begin{equation}\label{BiB}
(B_1,\cdots,B_m)=(\hat{E}_1,\cdots,\hat{E}_N)B.
\end{equation}
Since $B$ is real and $BB^t$ is a $N\times N$ positive semi-definite matrix, there
exists an orthogonal matrix $Q\in\operatorname{SO}(N)$ such that
\begin{equation}\label{BBdiag}
BB^t=Q\operatorname{diag}(x_1,\cdots,x_N)Q^t,
\end{equation}
where $x_{\alpha}\geq0$ for each $1\leq\alpha\leq N$.
Thus
\begin{equation}\label{BBnorm}
\sum_{r=1}^m\|B_r\|^2=\|B\|^2 =\sum_{\alpha=1}^Nx_{\alpha}.
\end{equation}
Moreover, this orthogonal matrix $Q$ determines an orthonormal basis $\{\hat{Q}_{\alpha}: \alpha\in S\}$ of $\mathcal{Q}(p,q)$ as (\ref{Qeq}) in Section \ref{sec2}.
We use the lexicographic order as in (\ref{eqno1}) for the indices sets $\{\(r,s\): 1\leq r<s\leq m\}$ and $\{\(\alpha,\beta\): 1\leq\alpha<\beta\leq N\}$. Then we can arrange $\{[B_r,B_s]\}_{r<s}$ and $\{[\hat{E}_\alpha,\hat{E}_\beta]\}_{\alpha<\beta}$ into $\binom{m}{2}$- and $\binom{N}{2}$-vectors, respectively.
Now we observe that
\begin{equation}\label{comm-bases}
\Big([B_1, B_2], \cdots, [B_{m-1}, B_m]\Big)=
\Big([\hat{E}_1, \hat{E}_2], \cdots, [\hat{E}_{N-1}, \hat{E}_N]\Big)\varphi(B).
\end{equation}
Let $C(E)$ denote the $\binom{N}{2}\times\binom{N}{2}$ matrix defined by
\begin{equation}\label{comm-bases1}
C(E)_{(\alpha,\beta)(\gamma,\tau)}:=
\langle[\hat{E}_{\alpha}, \hat{E}_{\beta}], [\hat{E}_{\gamma}, \hat{E}_{\tau}]\rangle,
\end{equation}
for $1\leq\alpha<\beta\leq N$,
$1\leq\gamma<\tau\leq N$. Moreover we will use the same notation for
$\{B_r\}$ and $\{\hat{Q}_{\alpha}\}$, \emph{i.e.}, the $\binom{m}{2}\times\binom{m}{2}$ matrix $C(B):=\Big(\langle
[B_{r_1}, B_{s_1}], [B_{r_2}, B_{s_2}]\rangle\Big)$ and the $\binom{N}{2}\times\binom{N}{2}$ matrix $C(Q):=\Big(\langle[\hat{Q}_{\alpha}, \hat{Q}_{\beta}], [\hat{Q}_{\gamma}, \hat{Q}_{\tau}]\rangle\Big)$
respectively. Then it is obvious from (\ref{comm-bases}) that
\begin{equation}\label{comm-bases2}
C(B)=\varphi(B^t)C(E)\varphi(B), \hskip 0.3cm C(Q)=\varphi(Q^t)C(E)\varphi(Q).
\end{equation}
By (\ref{comm-bases2}) and the multiplicativity of $\varphi$, we have
\begin{eqnarray}\label{commtransf}
\sum_{r,s=1}^m\|[B_r, B_s]\|^2
&=&2\operatorname{tr}C(B)=2\operatorname{tr}\varphi(B^t)C(E)\varphi(B)\\
&=&2\operatorname{tr}\varphi(BB^t)C(E)=2\operatorname{tr}
\varphi(\operatorname{diag}(x_1,\cdots,x_N))C(Q)\nonumber\\
&=&\sum_{\alpha,\beta=1}^Nx_{\alpha}x_{\beta}\|[\hat{Q}_{\alpha},\hat{Q}_{\beta}]\|^2.\nonumber
\end{eqnarray}
Combining (\ref{BBnorm}, \ref{commtransf}), the inequality (\ref{ineqQ(p,q)}) of Theorem \ref{DDVVQ(p,q)} is now transformed into the following:
\begin{equation}\label{DDVVtransf}
\sum_{\alpha,\beta=1}^Nx_\alpha x_\beta\|[\hat{Q}_\alpha,\hat{Q}_\beta]\|^2
\leq\frac12\(\sum_{\alpha=1}^Nx_\alpha\)^2\
\mbox{for any}\ x\in\mathbb{R}^N_+\ \mbox{and any} \ Q\in\operatorname{SO}(N),
\end{equation}
where $\mathbb{R}^N_+:=\left\{x=\(x_1,\cdots,x_N\)\in\mathbb{R}^N\setminus\{0\}:
x_\alpha\geq0\ \mbox{for any}\ 1\leq\alpha\leq N\right\}$.

The next step is to show (\ref{DDVVtransf}). For this, define the function
\begin{equation}\label{quadratic-form}
f_Q(x)=F(x,Q):=\sum_{\alpha,\beta=1}^Nx_{\alpha}x_{\beta}\|[\hat{Q}_{\alpha},
\hat{Q}_{\beta}]\|^2-\frac12\(\sum_{\alpha=1}^Nx_{\alpha}\)^2.
\end{equation}
Then $F$ is a continuous function defined on $\mathbb{R}^N\times\operatorname{SO}(N)$
(equipped with the product topology of Euclidean and Frobenius norm spaces) and thus uniformly continuous on any compact subset of $\mathbb{R}^N\times\operatorname{SO}(N)$.
Let $\Delta:=\big\{x\in\mathbb{R}^N_{+}: \sum_{\alpha=1}^Nx_{\alpha}=1\big\}$
and for any sufficiently small $\varepsilon>0$,
$\Delta_{\varepsilon}:=\left\{x\in\Delta: x_{\alpha}\geq\varepsilon, 1\leq\alpha\leq N\right\}$.
Also let
$$G:=\left\{Q\in\operatorname{SO}(N): f_Q(x)\leq0\ \mbox{for any}\ x\in\Delta\right\},$$
$$G_{\varepsilon}:=\left\{Q\in\operatorname{SO}(N):
f_Q(x)<0\ \mbox{for any}\ x\in\Delta_{\varepsilon}\right\}.$$
We claim that $G=\lim_{\varepsilon\rightarrow0}G_{\varepsilon}=\operatorname{SO}(N).$
Note that this implies (\ref{DDVVtransf}) by the homogeneity of $f_Q$ and thus proves Theorem \ref{DDVVQ(p,q)}.
In fact we can show
\begin{equation}\label{G epsilon}
G_{\varepsilon}=\operatorname{SO}(N)\ \mbox{for any sufficiently small } \varepsilon>0.
\end{equation}
To prove (\ref{G epsilon}), we use the continuity method, in which
we must prove the following three properties and remember that there are only the two trivial sets that are open and closed at the same time:
\begin{itemize}
\item[\textbf{(a)}]\label{step1} $I_N\in G_{\varepsilon}$ (and thus
$G_{\varepsilon}\neq\varnothing$);
\item[\textbf{(b)}]\label{step2} $G_{\varepsilon}$ is open in $\operatorname{SO}(N)$;
\item[\textbf{(c)}]\label{step3} $G_{\varepsilon}$ is closed in $\operatorname{SO}(N)$.
\end{itemize}

\textbf{Proof of (a)}.  For any $ x\in\Delta_\epsilon$, it follows from (\ref{eqno2}) that
$$\begin{aligned}
f_{I_N}\(x\)&=\sum_{\alpha,\beta=1}^Nx_\alpha{x}_\beta\|[\hat{E}_\alpha,\hat{E}_\beta]\|^2-\frac12\\
&=\sum_{\substack{1\leq i\leq p,\\ p+1\leq j<l\leq n}}x_{ij}x_{il}
+\sum_{\substack{1\leq i<k\leq p,\\ p+1\leq j\leq n}}x_{ij}x_{kj}
-\frac12\Bigg(\sum_{i=1}^p\sum_{j=p+1}^nx_{ij}\Bigg)^2\\
&<-\frac12\sum_{i=1}^p\sum_{j=p+1}^nx_{ij}^2<0.
\end{aligned}$$
Hence we have $ I_N\in G_\varepsilon$.  \hfill  $\qed$

\textbf{Proof of (b)}.
Since $F$ is uniformly continuous on
$\triangle_{\varepsilon}\times\operatorname{SO}(N)$, the function $g(Q):=\max_{x\in\Delta_{\varepsilon}}F(x,Q)$ is continuous on $Q\in\operatorname{SO}(N)$ and thus $G_{\varepsilon}$ is obviously open as the preimage of an open set $(-\infty, 0)$ of $g$. \hfill  $\qed$

\textbf{Proof of (c)}.
We only need to prove the following \textbf{a priori estimate}: \textit{Suppose $f_Q\(x\)\leq0$ for every $x\in\Delta_\varepsilon$. Then $f_Q\(x\)<0$ for every $x\in\Delta_\varepsilon$.} Provided with this, for a sequence $\{Q_k\}\subset G_{\varepsilon}$ which converges to a $Q\in\operatorname{SO}(N)$, we have
$$g(Q)=\lim_{k\rightarrow\infty}g(Q_k)=\lim_{k\rightarrow\infty}\max_{x\in\Delta_{\varepsilon}}F(x,Q_k)\leq0.$$
Therefore, $f_Q\(x\)\leq g(Q)\leq0$ for every $x\in\Delta_\varepsilon$. Then $f_Q\(x\)<0$ for every $x\in\Delta_\varepsilon$ and thus $Q\in G_{\varepsilon}$, proving the closedness of $G_{\varepsilon}$.

The proof of this estimate is as follows: If there is a point $y\in\Delta_\varepsilon$ such that $f_Q\(y\)=0$, we can arrange the entries decreasingly and assume without loss of generality that for some $1\leq\gamma\leq N$,
$$y\in\Delta_\varepsilon^\gamma:=\left\{x\in\Delta_\varepsilon: x_\alpha>\varepsilon\ \mbox{for}\ \alpha\leq\gamma,\ \mbox{and}\ x_\beta=\varepsilon\ \mbox{for}\ \beta>\gamma\right\}.$$
With the given prerequisites, then $y$ is a maximum point of $f_Q\(x\)$ in the cone spanned by $\Delta_\varepsilon$ and an interior maximum point of $f_Q\(x\)$ in $\Delta_\varepsilon^\gamma$.
Hence, applying the Lagrange Multiplier Method,
there exist real numbers $a, b_{\gamma+1}, \cdots, b_N$ such that
\begin{equation}\label{partial f}
\begin{aligned}\(\frac{\partial f_Q}{\partial x_1}(y),\cdots,\frac{\partial f_Q}{\partial x_{\gamma}}(y)\)
&=2a(1,\cdots,1); \\
\(\frac{\partial f_Q}{\partial x_{\gamma+1}}(y),\cdots,\frac{\partial f_Q}{\partial x_{N}}(y)\)
&=2(b_{\gamma+1},\cdots,b_N),
\end{aligned}
\end{equation}
or equivalently,
\begin{equation}\label{partial f2}
\sum_{\beta=1}^Ny_{\beta}\|[\hat{Q}_{\alpha},\hat{Q}_{\beta}]\|^2-\frac12=\Big\{
\begin{array}{ll}
a &\text{if}\  \alpha\leq\gamma;\\
b_{\alpha} &\text{if}\  \alpha>\gamma.
\end{array}
\end{equation}
Hence
\begin{equation}\label{partial f3}
f_Q(y)=\Big(\sum_{\alpha=1}^{\gamma}y_{\alpha}\Big)a+\Big(\sum_{\alpha=\gamma+1}^Nb_{\alpha}\Big)\varepsilon
=0\ \mbox{and}\ \sum_{\alpha=1}^{\gamma}y_{\alpha}+(N-\gamma)\varepsilon=1.
\end{equation}
Meanwhile, by the homogeneity of $f_Q$, we see
\begin{equation}\label{partial fv}
\frac{\partial f_Q}{\partial \nu}(y)=2(a\gamma+\sum_{\alpha=\gamma+1}^Nb_{\alpha})\leq 0,
\end{equation} where
$\nu=(1,\cdots,1)$ is the vector normal to $\Delta$ in
$\mathbb{R}^N$. For any sufficiently small $\varepsilon$ (such as
$\varepsilon<1/N$), it follows from (\ref{partial f3}) and (\ref{partial fv}) that
$a\geq 0$. Without loss of generality, we assume
$y_1=\max\{y_1,\cdots,y_{\gamma}\}>\varepsilon$.
Let $$J:=\left\{\beta\in S: \|[\hat{Q}_{1}, \hat{Q}_{\beta}]\|^2\geq\frac12\right\},$$
and let $n_1$ be the number of elements of $J$.
Now combining Lemma \ref{lem3} and (\ref{partial f2}) will give a
contradiction as follows:

\begin{equation}\label{contrad}
\begin{aligned}
\frac12\leq\frac12+a&=\sum^N_{\beta=2}y_\beta\|[\hat{Q}_1,\hat{Q}_\beta]\|^2\\
&=\sum_{\beta\in J}y_\beta\bigg(\|[\hat{Q}_1,\hat{Q}_\beta]\|^2-\frac12\bigg)+\frac12\sum_{\beta\in J}y_\beta+\sum_{\beta\in S\setminus J}y_\beta\|[\hat{Q}_1,\hat{Q}_\beta]\|^2\\
&\leq y_1\sum_{\beta\in J}\bigg(\|[\hat{Q}_1,\hat{Q}_\beta]\|^2-\frac12\bigg)
+\frac12\sum_{\beta\in J}y_\beta+\sum_{\beta\in S\setminus J}y_\beta\|[\hat{Q}_1,\hat{Q}_\beta]\|^2\\
&\leq\frac12y_1+\frac12\sum_{\beta\in J}y_\beta+\sum_{\beta\in S\setminus J}y_\beta\|[\hat{Q}_1,\hat{Q}_\beta]\|^2\\
&\leq\frac12\sum^N_{\beta=1}y_\beta=\frac12.
\end{aligned}
\end{equation}
Thus $a=0$, the third line shows that $y_{\beta}=y_1$ for $\beta\in J$ and the fourth line shows that $\sum_{\beta\in J}\(\|[\hat{Q}_1,\hat{Q}_\beta]\|^2-\frac12\)=\frac12$.
Then by (\ref{lem4}), we have
\begin{equation}\label{n1N}
\frac12(n_1+1)=\sum_{\beta\in J}\|[\hat{Q}_1, \hat{Q}_{\beta}]\|^2
\leq\sum_{\beta\in S}\|[\hat{Q}_1, \hat{Q}_{\beta}]\|^2=\frac12(n-2)<\frac12N.
\end{equation}
Hence $S\setminus(J\cup\{1\})\neq\varnothing$, and the last ``$\leq$" in
(\ref{contrad}) should be ``$<$" by the definition of $J$ and the
positivity of $y_{\beta}$ for $\beta\in S\setminus(J\cup\{1\})$.\hfill
$\Box$

Finally, we consider the equality condition of (\ref{ineqQ(p,q)}) in view of the proof of the a priori estimate. When $f_Q(y)=0$ for some $y\in\Delta$ and $Q\in\operatorname{SO}(N)$,
we can assume without loss of generality that
$$y\in\Delta^\gamma:=\left\{x\in\Delta: x_\alpha>0\ \mbox{for}\ \alpha\leq\gamma,\ \mbox{and}\ x_\beta=0
\ \mbox{for}\ \beta>\gamma\right\}$$
for some $2\leq\gamma\leq N$. Note that the fifth line in (\ref{contrad}) implies that
$y_{\beta}=0$ for all $\beta\in S\setminus(J\cup\{1\})$, and thus $\gamma= n_1+1$.
Therefore, we have also (\ref{partial f}-\ref{n1N}) with $\varepsilon=a=0$, all inequalities in (\ref{contrad}) and thus in Lemma \ref{lem3} achieve the equality.
As in the proof of Lemma \ref{lem3}, we assume without loss of generality that
$$\hat{Q}_1=\begin{pmatrix}
0 & 0 & D \\
0 & 0 & 0 \\
D & 0 & 0 \\
\end{pmatrix},\
\hat{Q}_\beta=\begin{pmatrix}
0 & 0 & b_\beta \\
0 & 0 & c_\beta \\
b_\beta^t & c_\beta^t & 0 \\
\end{pmatrix}\ \mbox{for}\ \beta\in S\setminus\{1\},$$
where $D=\operatorname{diag}(\lambda_1,\ldots, \lambda_q)$.
In the proof of Lemma \ref{lem3} with $J_{\alpha}=J$,
the first line in (\ref{ineq-lem3}) shows that
$c_\beta=0$, $b_\beta$ is skew-symmetric and $b^\beta_{ij}=-b^\beta_{ji}=0$ for all $\beta\in J$
and $(i,j)\in\{(i,j): 1\leq i< j\leq q\}\setminus I$;
the third line in (\ref{ineq-lem3}) shows that
$b^\beta_{ij}=0$ for all $\beta\in S\setminus(J\cup\{1\})$ and $(i,j)\in I$.
For the eigenvalues $\pm\lambda_1,\ldots, \pm\lambda_q$ of $\hat{Q}_1$, because of the equality condition of Lemma \ref{lem2}, we have to consider the following cases:

$(1)$ $\lambda_1=\lambda_2=\frac12$, $\lambda_3=\cdots=\lambda_q=0$, $n_0=1$ and $I=\{(1,2)\}$.
Since $\{\hat{Q}_\alpha\}_{\alpha\in S}$ is an orthonormal basis, by the above discussion,
we have $n_1=1$, $y=(\frac12, \frac12, 0, \cdots, 0)$ 
and $b_2=\pm\frac12(E_{12}-E_{21})$ for the only element $2\in J$.

$(2)$ $\lambda_1=\lambda_2=\lambda_3=\frac1{\sqrt{6}}$, $\lambda_4=\cdots=\lambda_q=0$, $n_0=3$ and $I=\{(1,2), (1,3), (2,3)\}$.
The above discussion imply that for each $\beta\in J$, $b_\beta$ is a linear combination of
$\check{E}_{12}:=\frac12(E_{12}-E_{21})$, $\check{E}_{13}:=\frac12(E_{13}-E_{31})$ and $\check{E}_{23}:=\frac12(E_{23}-E_{32})$.
Direct calculations show that $\|[\hat{Q}_1,\hat{Q}_\beta]\|^2=\frac23$ for each $\beta\in J$, and thus
$\sum_{\beta\in J}\(\|[\hat{Q}_1,\hat{Q}_\beta]\|^2-\frac12\)=n_1\(\frac23-\frac12\)=\frac16n_1$.
Hence we have $n_1=3$, $y=(\frac14, \frac14, \frac14, \frac14, 0, \cdots, 0)$
and $(b_2, b_3, b_4)=(\check{E}_{12}, \check{E}_{13}, \check{E}_{23})P$ for some $P\in\operatorname{O}(3)$.
We can calculate directly that
$\|[\hat{Q}_{\alpha}, \hat{Q}_{\beta}]\|^2=\frac14$ for any $2\leq\alpha\neq\beta\leq4$.
It follows that $$\begin{aligned}
\sum_{\beta=1}^Ny_{\beta}\|[\hat{Q}_2, \hat{Q}_{\beta}]\|^2
&=y_1\|[\hat{Q}_2, \hat{Q}_1]\|^2+\sum_{\beta=3}^4y_{\beta}\|[\hat{Q}_2, \hat{Q}_{\beta}]\|^2\\
&=\frac14\cdot\frac23+2\cdot\frac14\cdot\frac14=\frac7{24}<\frac12,
\end{aligned}$$
which contradicts the equality $(\ref{partial f2})$ for $\alpha=2<4=\gamma$.

So far we have proven that  $f_Q(x)=0$ ($x\in\mathbb{R}^N_+$) if and only if in the orthonormal basis $\{\hat{Q}_{\alpha}\}_{\alpha\in S}$ of $\mathcal{Q}(p,q)$ determined by $Q\in\operatorname{O}(N)$, there exist two of them, say $\hat{Q}_1,\hat{Q}_2$, such that $x_1=x_2\geq0$, $x_{\alpha}=0$ for $3\leq\alpha\leq N$, and for some $P\in\operatorname{O}(n)$, $P^t\hat{Q}_iP=\frac12\operatorname{diag}(C_i,0)$ for $i=1,2$ up to a rotation of the three matrices.

Now let $B_1,\cdots,B_m\in\mathcal{Q}(p,q)$ be matrices achieving the equality with the corresponding matrices $B\in M(N,m)$, $Q\in\operatorname{O}(N)$ satisfying (\ref{Qeq}, \ref{BiB}, \ref{BBdiag})
and the equality conditions in the last paragraph.
Then we have $Q^tBB^tQ=\operatorname{diag}(cI_2,0)$ for some $c>0$
($c=0$ only if $B_i=0$ for all $i=1,\ldots,m$).
Hence $Q^tB=\sqrt{c}(a_1,a_2,0,\cdots,0)^t\in M(N,m)$ for some orthonormal column vectors $a_1,a_2\in\mathbb{R}^m$.
Thus there is an orthogonal matrix $R\in\operatorname{O}(m)$
(by extending $\{a_1,a_2\}$ to an orthonormal basis $\{a_i\}_{i=1}^m$ of $\mathbb{R}^m$
and taking $R=(a_1,\cdots,a_m)\in\operatorname{O}(m)$) such that
$$Q^tBR=\sqrt{c}\begin{pmatrix}
I_2 & 0 \\
0 & 0 \\
\end{pmatrix}\in M(N,m).$$
It follows from (\ref{Qeq}, \ref{BiB}) that
$$\(B_1,\cdots,B_m\)R=\(\hat{E}_1,\cdots,\hat{E}_N\)BR=\(\hat{Q}_1,\cdots,\hat{Q}_N\)Q^tBR
=\sqrt{c}\(\hat{Q}_1,\hat{Q}_2,0,\cdots,0\).$$
Therefore, we have completed the proof of the equality condition in Theorem \ref{DDVVQ(p,q)}.

\subsection{Proof of Theorem \ref{DDVVQ(n-1,1)}}\label{sec2.2}
We recall that $d=\operatorname{dim}\operatorname{Span}\{B_1,\cdots,B_m\}$.
Since the inequality (\ref{ineqQ(n-1,1)}) is invariant under rotations on the matrix tuple $(B_1,\cdots,B_m)$, we may assume without loss of generality that $B_r=0$ for each $r>d$ and
$\<B_r, B_s\>=0$ for any $1\leq r\neq s\leq d$.
For each $1\leq r\leq d$, let $B_r=\begin{pmatrix}
0 & b_r  \\
b_r^t & 0 \\
\end{pmatrix}$,
and let $b_r=(b_r^1, \cdots, b_r^{n-1})^t$. 
Then by Lagrange's identity and the Cauchy-Schwarz inequality, we have
$$\begin{aligned}
\sum^m_{r,s=1}\|\[B_r,B_s\]\|^2&
=\sum^d_{r,s=1}\sum^{n-1}_{i,j=1}\(b_r^ib_s^j-b_s^ib_r^j\)^2
=2\sum^d_{r,s=1}\(\|b_r\|^2\|b_s\|^2-\<b_r, b_s\>^2\)\\
&=2\sum_{1\leq r\neq s\leq d}\|b_r\|^2\|b_s\|^2
=2\[\(\sum^d_{r=1}\|b_r\|^2\)^2-\sum^d_{r=1}\|b_r\|^4\]\\
&\leq2\(1-\frac{1}{d}\)\(\sum^d_{r=1}\|b_r\|^2\)^2
=\frac{d-1}{2d}\(\sum^m_{r=1}\|B_r\|^2\)^2,
\end{aligned}$$
and the equality holds if and only if $\|B_1\|=\cdots=\|B_d\|$, or equivalently,
there exists $P\in\operatorname{O}(n)$ such that $P^tB_rP=D_r$ for each $1\leq r\leq d$.
Thus the proof is complete.

\subsection{Normal scalar curvature inequality}
Suppose $M^n$ is an austere submanifold of a real space form $N^{n+m}(\kappa)$
with constant sectional curvature $\kappa$.
Let $R$ be the Riemannian curvature tensor of $M$, and let
$R^{\bot}$ be the curvature tensor of the normal connection.
For an arbitrary point $p\in M$,
let $\{e_1,\cdots,e_n\}$ and $\{\xi_1,\cdots,\xi_m\}$ be orthonormal bases of
$T_pM$ and $T^{\bot}_pM$, respectively.
Then the normalized scalar curvature is
$$\rho=\frac{2}{n(n-1)}\sum_{1=i<j}^n\langle R(e_i, e_j)e_j, e_i\rangle$$
and the normalized normal scalar curvature is
$$\rho^{\bot}=\frac{2}{n(n-1)}\Bigg(\sum_{1=i<j}^n\sum_{1=r<s}^m\langle R^{\bot}(e_i, e_j)\xi_r, \xi_s\rangle^2\Bigg)^{\frac12}.$$
For each $1\leq r\leq m$, let $A_r$ be the matrix corresponding to the second fundamental form along the direction $\xi_r$ with respect to the basis $\{e_1,\cdots,e_n\}$.
Then the Gauss equation implies that
\begin{equation}\label{DDVVRHS}
\kappa-\rho=\frac1{n(n-1)}\|\operatorname{II}\|^2=\frac1{n(n-1)}\sum^m_{r=1}\|A_r\|^2,
\end{equation}
and the Ricci equation implies that
\begin{equation}\label{DDVVLHS}
\rho^{\bot}=\frac1{n(n-1)}\Bigg(\sum^m_{r,s=1}\|[A_r,A_s]\|^2\Bigg)^{\frac12}.
\end{equation}
Note that $\operatorname{dim}|\operatorname{II}_p|
=\operatorname{dim}\operatorname{Span}\{A_1,\cdots,A_m\}$.
Therefore, it is easy to see that Theorem \ref{GeoDDVVQ(p,q)} and \ref{GeoDDVVQ(n-1,1)}
follow from Theorem \ref{DDVVQ(p,q)} and \ref{DDVVQ(n-1,1)}.

\section{Examples}\label{sec3}
In this section, we introduce more examples of austere submanifolds of type $\mathcal{Q}(p,q)$.
\subsection{Generalized helicoids}
The following example was constructed by Bryant\cite{B91}
in the classification of simple austere submanifolds.
For given integers $2\leq s<n$ and constants $\lambda_0\geq0$, $\lambda_1\geq\cdots\geq\lambda_s>0$,
let $f: \mathbb{R}^n\rightarrow\mathbb{R}^{n+s}$ be the smooth map given by
$$\begin{aligned}
f(x_0, x_1, \cdots, x_{n-1})=\big(\lambda_0x_0, x_1\cos(\lambda_1x_0), x_1\sin(\lambda_1x_0),\cdots, x_s\cos(\lambda_sx_0&), x_s\sin(\lambda_sx_0), \\
&x_{s+1}, \cdots, x_{n-1}\big).
\end{aligned}$$
Then $M(s, \lambda)=f(\mathbb{R}^n)$ is an $n$-dimensional austere submanifold
of type $\mathcal{Q}(n-1, 1)$.
(Note that $M(s, \lambda)$ is singular along the subset
$\left\{f(x_0, x_1, \cdots, x_{n-1}): x_1=\cdots=x_s=0\right\}$ if $\lambda_0=0$.)
One can verify that, if $\lambda_0>0$ and $\lambda_1=\cdots=\lambda_s$,
then the equality in the normal scalar curvature inequality
$$\rho^{\bot}\leq\sqrt{\frac{s-1}{2s}}(\kappa-\rho)$$
holds on the subset $\left\{f(x_0, x_1, \cdots, x_{n-1}): x_1=\cdots=x_s=0\right\}$.

\subsection{Austere embeddings of Grassmann manifolds
$\operatorname{G}_2(\mathbb{R}^4)$ and $\operatorname{G}_2(\mathbb{C}^4)$}
In \cite{GZ23}, the authors considered the orbits of
the conjugate action of $\operatorname{SO}(n)$ (resp. $\operatorname{U}(n)$)
on traceless, symmetric (resp. Hermitian) matrices with Frobenius norm 1.
Their calculations showed that a singular orbit of this $\operatorname{SO}(4)$-action
(resp. $\operatorname{U}(4)$-action)
is diffeomorphic to the Grassmann manifold $\operatorname{G}_2(\mathbb{R}^4)$
(resp. $\operatorname{G}_2(\mathbb{C}^4)$),
and is an austere submanifold of type $\mathcal{Q}(2,2)$ (resp. $\mathcal{Q}(4,4)$)
in the unit sphere $\mathbf{S}^8$ (resp. $\mathbf{S}^{14}$).
In fact, these orbits are given by the standard embeddings of flag manifolds \cite{BCO}.

\subsection{Some isoparametric focal submanifolds satisfying Condition A}
For an isoparametric hypersurface in a unit sphere with multiplicities $(m_1, m_2)$,
there exist two focal submanifolds $M_+$ and $M_-$ of codimension $m_1+1$ and $m_2+1$, respectively.
A point in a focal submanifold is said to satisfy Condition A
if the kernels of all shape operators coincide \cite{OT75}.
The Condition A plays an important role in the research of isoparametric hypersurfaces
in unit spheres with $4$ distinct principal curvatures.
In \cite{GTY20},
the subset $C_A$ of the points satisfying Condition A was determined for all the focal submanifolds.
Thus, we know that the focal submanifolds satisfying Condition A everywhere are
$M_+$ with multiplicities $(2, 2)$, $(1, k)$($k\in\mathbb{N}^+$) and
$M_-$ with multiplicities $(2, 1)$, $(6, 1)$, $(4, 3)$(definite).
We are going to find austere submanifolds of type $\mathcal{Q}(p,q)$ in these focal submanifolds.

Let $M_+$ (resp. $M_-$) be the focal submanifold of an isoparametric hypersurface
with $4$ distinct principal curvatures and codimension $m_1+1$ (resp. $m_2+1$) in a unit sphere.
Suppose $M_+$ (resp. $M_-$) satisfies Condition A everywhere.
Then for any point $p\in M_+$ (resp. $M_-$ with $(m_1, m_2)$ interchanged),
there exist an orthonormal basis $\{e_1,\cdots,e_{m_1+2m_2}\}$ of $T_pM_+$ and an orthonormal basis $\{\xi_0,\xi_1,\cdots,\xi_{m_1}\}$ of $T_p^{\bot}M_+$ such that
the matrices corresponding to the shape operators in directions $\xi_0,\xi_1,\cdots,\xi_{m_1}$
can be written as
$$A_0=\begin{pmatrix}
I_{m_2} & 0 & 0\\
0 & -I_{m_2} & 0\\
0 & 0 & 0\\
\end{pmatrix}\ \mbox{and}\
A_r=
\begin{pmatrix}
0 & a_r & 0\\
a_r^t & 0 & 0\\
0 & 0 & 0\\
\end{pmatrix}\ \mbox{for}\ 1\leq r\leq m_1,$$
where $a_r\in M(m_2, m_2)$.
In addition, the restrictions of these shape operators to $\operatorname{Span}\{e_1,\cdots,e_{2m_2}\}$
induce a Clifford system $\{P_0,P_1,\cdots,P_{m_1}\}$ on $\mathbb{R}^{2m_2}$ (cf. \cite{Chi12}).

The key observation is that $M_+$ is an austere submanifold of type $\mathcal{Q}(p,q)$
if and only if there exists $P\in\operatorname{O}(m_1+2m_2)$ such that
$$P^2=I_{m_1+2m_2},\ P\neq\pm I_{m_1+2m_2}\ \mbox{and}\ PA_r=-A_rP\ \mbox{for}\ 0\leq r\leq m_1,$$
or equivalently, there exists $P_{m_1+1}\in\operatorname{O}(2m_2)$ such that
$\{P_0,P_1,\cdots,P_{m_1}, P_{m_1+1}\}$ forms a Clifford system on $\mathbb{R}^{2m_2}$.
Since $m_1=1,2\not\equiv0\,(\operatorname{mod}4)$,
by the theory of Clifford systems \cite{FKM81},
every Clifford system $\{Q_0,Q_1,\cdots,Q_{m_1}\}$ on $\mathbb{R}^{2m_2}$
is algebraically equivalent to $\{P_0,P_1,\cdots,P_{m_1}\}$, i.e.,
there exists $U\in\operatorname{O}(2m_2)$ such that $P_r=UQ_rU^t$ for $0\leq r\leq m_1$.
Hence, the Clifford system $\{P_0,P_1,\cdots,P_{m_1}\}$ can be extended to
a Clifford system $\{P_0,P_1,\cdots,P_{m_1}, P_{m_1+1}\}$ on $\mathbb{R}^{2m_2}$ if and only if
$m_2=k\delta(m_1+1)$ for some positive integer $k$, where $\delta(m)$ denotes the dimension of irreducible representation of the Clifford algebra $\mathcal{C}_{m-1}$ which satisfies $\delta(m+8)=16\delta(m)$ and can be listed in the following Table \ref{table3}.
\begin{table}[h!]
\caption{Dimension of irreducible representation of the Clifford algebra $\mathcal{C}_{m-1}$}\label{table3}
\centering
\begin{tabular}{|c|c|c|c|c|c|c|c|c|c|}
\hline
$m$ & $1$ & $2$ & $3$ & $4$ & $5$ & $6$ & $7$ & $8$ & $\cdots~ m+8$\\
\hline
$\delta(m)$& $1$ &$2$ &$4$ &$4$ & $8$ & $8$ & $8$ & $8$ & $\cdots~ 16\delta(m)$\\
\hline
\end{tabular}
\end{table}

It follows that the focal submanifolds $M_+$ with multiplicities $(1, 2k)$
are austere submanifolds of type $\mathcal{Q}(m_2, m_1+m_2)$.
By interchanging the multiplicities $(m_1, m_2)$, we also obtain that
the focal submanifolds $M_-$ with multiplicities $(2, 1)$, $(6, 1)$, $(4, 3)$(definite)
are austere submanifolds of type $\mathcal{Q}(m_1, m_1+m_2)$.
However, the focal submanifolds $M_-$ with multiplicities $(2, 1)$, $(6, 1)$ are congruent to
focal submanifolds $M_+$ with multiplicities $(1, 2)$, $(1, 6)$.

Recalling the equality condition in Theorem \ref{DDVVQ(p,q)},
we find that $\{C_1,C_2\}$ forms a Clifford system on $\mathbb{R}^{4}$.
Note that
the Clifford system $\{C_1,C_2\}$ must be algebraically equivalent to the Clifford system $\{P_0,P_1\}$ given by the shape operators of the focal submanifold $M_+$ with multiplicities $(1, 2)$.
Then it follows from Theorem \ref{GeoDDVVQ(p,q)} that
the focal submanifold $M_+$ with multiplicities $(1, 2)$
achieves the equality in the normal scalar curvature inequality
$$\rho^{\bot}\leq\frac{\sqrt{2}}{2}(\kappa-\rho).$$

To sum up the above discussion, we have the following proposition.
\begin{prop}\label{example3}
Let $M_+$ $($resp. $M_-$$)$ be the focal submanifold of an isoparametric hypersurface
with $4$ distinct principal curvatures and codimension $m_1+1$ $($resp. $m_2+1$$)$ in a unit sphere.
\begin{itemize}
\item[(1)] For each positive integer $k$, the focal submanifold $M_+$ with multiplicities $(1, 2k)$
is an austere submanifold of type $\mathcal{Q}(2k, 2k+1)$.
\item[(2)] The focal submanifold $M_-$ with multiplicities $(4, 3)$$($definite$)$
is an austere submanifold of type $\mathcal{Q}(4, 7)$.
\item[(3)] The focal submanifold $M_+$ with multiplicities $(1, 2)$
attains the equality everywhere in the normal scalar curvature inequality $(\ref{GeoineqQ(p,q)})$ of Theorem $\ref{GeoDDVVQ(p,q)}$, i.e., $\rho^{\bot}=\frac{\sqrt{2}}{2}(\kappa-\rho)$.
\end{itemize}
\end{prop}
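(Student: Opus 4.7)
My plan is to leverage the correspondence established in the preceding discussion between austere type $\mathcal{Q}(p,q)$ structure and extensions of the Clifford system generated by the shape operators, then apply it to each of the three stated cases in turn.

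For parts (1) and (2), I first recall that for a focal submanifold $M_+$ satisfying Condition A everywhere, being of type $\mathcal{Q}(p,q)$ is equivalent to the Clifford system $\{P_0,\ldots,P_{m_1}\}$ on $\mathbb{R}^{2m_2}$ admitting a symmetric-orthogonal extension by one more anticommuting element, which by the classification of Clifford modules holds if and only if $m_2 = k\delta(m_1+1)$ for some positive integer $k$. I then verify this numerical condition case-by-case using Table \ref{table3}: for $M_+$ with $(m_1,m_2)=(1,2k)$ one has $\delta(2)=2$, so $m_2 = k\delta(2)$ works; for $M_-$ with $(m_1,m_2)=(4,3)$, interchanging the multiplicities to apply the $M_+$-analysis with effective parameters $(\tilde m_1,\tilde m_2)=(3,4)$ gives $\delta(4)=4$ and $\tilde m_2 = 1\cdot\delta(4)$. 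To read off $(p,q)$, I determine the signature of the extending matrix $P$, which acts as $P_{m_1+1}$ on the first $2\tilde m_2$ coordinates (with $(+1)$- and $(-1)$-eigenspaces each of dimension $\tilde m_2$) and as $\pm I$ on the remaining $\tilde m_1$-dimensional subspace; aggregating the eigenspaces yields $(p,q)=(2k,2k+1)$ in case (1) and $(p,q)=(4,7)$ in case (2).

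For part (3), I specialize part (1) to $k=1$, so that $M_+$ with multiplicities $(1,2)$ is of type $\mathcal{Q}(2,3)$ with $n=5$ and two nonzero shape operators $A_0,A_1$. The equality condition in Theorem \ref{GeoDDVVQ(p,q)} requires these shape operators at each point to take the normal form $\lambda\operatorname{diag}(C_1,0),\,\lambda\operatorname{diag}(C_2,0)$ in suitable orthonormal bases. I verify by direct computation that $C_1^2 = C_2^2 = I_4$, that $C_1,C_2$ are symmetric and trace-free, and that $C_1C_2+C_2C_1 = 0$, so that $\{C_1,C_2\}$ is itself a Clifford system on $\mathbb{R}^4$. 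Since $m_1+1 = 2 \not\equiv 0\pmod 4$, the classification of Clifford systems in \cite{FKM81} yields a unique equivalence class, hence the shape-operator Clifford system $\{P_0,P_1\}$ on $\mathbb{R}^4$ is conjugate to $\{C_1,C_2\}$ by some $U\in\operatorname{O}(4)$. Extending $U$ trivially by one dimension produces the orthonormal frame achieving the equality at every point, and a matching rotation in the normal bundle absorbs the factor $\lambda$.

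The main obstacle I anticipate is the bookkeeping of the correct $(p,q)$ signature, since one must track both the even-dimensional Clifford block and the trivial complement contributed by the last $\tilde m_1$ coordinates; a miscount there would shift $(p,q)$ and invalidate the dimension counts in parts (1)--(2). Once this is handled, no further analytic input is needed: the algebraic equivalence of Clifford systems from \cite{FKM81} supplies part (3) immediately, and the fact that Condition A holds \emph{everywhere} on the listed focal submanifolds propagates the pointwise conclusion globally.
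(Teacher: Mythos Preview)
Your proposal is correct and follows essentially the same route as the paper: both reduce parts (1)--(2) to the Clifford-extension criterion established in the preceding discussion and verify the numerical condition $m_2 = k\,\delta(m_1+1)$ case-by-case via Table~\ref{table3}, and both deduce part (3) from the fact that $\{C_1,C_2\}$ is a Clifford system on $\mathbb{R}^4$ algebraically equivalent (by \cite{FKM81}) to the shape-operator system $\{P_0,P_1\}$. Your explicit signature count for the extending involution $P$ is a welcome elaboration of what the paper records more tersely as type $\mathcal{Q}(m_2,m_1+m_2)$.
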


\section{BW-type Inequality and Simons-type Gap Theorem}\label{sec4}
\subsection{BW-type inequality}
\begin{lem}\label{BWQ(p,q)}
Suppose $B_1, B_2\in\mathcal{Q}(p,q)$. Then
$$\|[B_1,B_2]\|^2\leq\|B_1\|^2\|B_2\|^2.$$
The equality holds if and only if there exists $P\in\operatorname{O}(n)$ such that
$$(P^tB_1P, P^tB_2P)=(\lambda\operatorname{diag}(C_1,0),\mu\operatorname{diag}(C_2,0))$$
for some $\lambda,\mu\geq0$.
\end{lem}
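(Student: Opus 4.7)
The plan is to adapt the normal-form strategy developed in Lemma~\ref{lem3} and bound the commutator by a direct term-by-term argument. Note that a straight application of Theorem~\ref{DDVVQ(p,q)} with $m=2$ only gives $\|[B_1,B_2]\|^2\leq\tfrac14(\|B_1\|^2+\|B_2\|^2)^2$, which by AM--GM is strictly weaker than $\|B_1\|^2\|B_2\|^2$ whenever $\|B_1\|\neq\|B_2\|$; hence the factor of $2$ in the classical B\"ottcher--Wenzel inequality really does need to be sharpened by a new argument.

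I first put $B_2$ in canonical form. Using an $\operatorname{O}(p)\times\operatorname{O}(q)$ conjugation to perform the SVD of $a_2$ and then the auxiliary involution $P_2$ from the proof of Lemma~\ref{lem3}, I may assume $B_2=\operatorname{diag}(D_1,0,-D_1)$ with $D_1=\operatorname{diag}(\mu_1,\ldots,\mu_q)$, $\mu_i\geq0$. Writing the transformed $B_1$ in the corresponding block form with symmetric part $X$, skew part $Y$ and off-diagonal block $c$, the commutator computation already carried out in the proof of Lemma~\ref{lem3} gives
\begin{equation*}
\|[B_1,B_2]\|^2=2\sum_{i,j=1}^q(\mu_i-\mu_j)^2x_{ij}^2+2\sum_{i,j=1}^q(\mu_i+\mu_j)^2y_{ij}^2+2\sum_{i,j}\mu_j^2c_{ij}^2.
\end{equation*}
Since $\|B_2\|^2=2\sum_k\mu_k^2$, the elementary bounds $(\mu_i\pm\mu_j)^2\leq2(\mu_i^2+\mu_j^2)\leq\|B_2\|^2$ and $\mu_j^2\leq\tfrac12\|B_2\|^2$ now give, term by term,
\begin{equation*}
\|[B_1,B_2]\|^2\leq\|B_2\|^2\bigl(2\|X\|^2+2\|Y\|^2+\|c\|^2\bigr)\leq2\|B_2\|^2\bigl(\|X\|^2+\|Y\|^2+\|c\|^2\bigr)=\|B_1\|^2\|B_2\|^2.
\end{equation*}

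For the equality analysis I trace tightness through each step. In the nontrivial regime $\|B_2\|\neq0$, the last inequality forces $c=0$; for any $x_{ij}\neq0$, the equality $(\mu_i-\mu_j)^2=\|B_2\|^2$ would force all $\mu_k=0$, a contradiction, so $X=0$; and for any $y_{ij}\neq0$, the equality $(\mu_i+\mu_j)^2=\|B_2\|^2$ forces $\mu_i=\mu_j>0$ with every other $\mu_k=0$, hence $Y$ is supported on exactly one pair $(i_0,j_0)$. Undoing $P_2$ then shows that $B_1$ and $B_2$ are both supported on the common $4$-dimensional subblock indexed by $\{i_0,j_0,p+i_0,p+j_0\}$, with $B_2$ a positive multiple of $C_1$ and $B_1$ a real (possibly signed) multiple of $C_2$ on that subblock.

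The main obstacle is then a cosmetic but delicate matching with the stated canonical form $(\lambda\operatorname{diag}(C_1,0),\mu\operatorname{diag}(C_2,0))$, $\lambda,\mu\geq0$: the SVD naturally places $B_2$ in the $C_1$-slot and $B_1$ in the $C_2$-slot, and the coefficient in front of $C_2$ may be negative. To fix this I exploit the Clifford-system structure $C_1,C_2\in\operatorname{O}(4)$ with $C_1^2=C_2^2=I_4$ and $C_1C_2+C_2C_1=0$: conjugation by $C_1$ (extended by the identity to $\operatorname{O}(n)$) flips the sign of $C_2$ while preserving $C_1$, conjugation by $C_2$ flips the sign of $C_1$ while preserving $C_2$, and a block rotation $\operatorname{diag}(J,I_2)$ with $J=\bigl(\begin{smallmatrix}0&1\\-1&0\end{smallmatrix}\bigr)$ swaps the roles of $C_1$ and $C_2$ up to a sign correctable by the previous moves. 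Composing these involutions produces the required $P\in\operatorname{O}(n)$ realizing the form with $\lambda,\mu\geq0$.
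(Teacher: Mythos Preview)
Your approach is essentially the same as the paper's: normalize one of the matrices by the SVD-plus-$P_2$ trick from Lemma~\ref{lem3}, write out the commutator in terms of the symmetric/skew/off-diagonal blocks $X,Y,c$, and bound each coefficient $(\mu_i\pm\mu_j)^2$ and $\mu_j^2$ by $\|B_2\|^2$ or $\tfrac12\|B_2\|^2$.

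Two minor differences worth noting. First, the paper normalizes $B_1$ rather than $B_2$, so the equality analysis lands directly with $B_1$ in the $C_1$-slot and only a sign on $C_2$ needs fixing; by normalizing $B_2$ you incur the extra swap step, which you handle correctly via the Clifford relations. Second, the paper uses the sharper bound $(\lambda_i-\lambda_j)^2\leq\lambda_i^2+\lambda_j^2\leq\tfrac12\|B_1\|^2$ on the symmetric part, yielding the intermediate estimate $\|[B_1,B_2]\|^2\leq\|B_1\|^2(\|X\|^2+2\|Y\|^2+\|c\|^2)$; equality in the final step $\|X\|^2+2\|Y\|^2+\|c\|^2\leq\|B_2\|^2$ then forces $X=0$ and $c=0$ simultaneously, whereas in your coarser intermediate $2\|X\|^2+2\|Y\|^2+\|c\|^2$ only $c=0$ is forced at the last step and $X=0$ must be recovered from the first inequality. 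Both routes close, and your equality discussion is in fact more explicit than the paper's ``easily seen from the proof.''
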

\begin{proof}
Since the required inequality is also invariant under orthogonal congruences,
as in the proof of Lemma \ref{lem3},
we may assume without loss of generality that $p\geq q$ and
$$B_1=\begin{pmatrix}
0 & 0 &D \\
0 &0 &0 \\
D &0 &0
\end{pmatrix},\
B_2=
\begin{pmatrix}
0 &0 &b \\
0 &0 &c \\
b^t &c^t &0
\end{pmatrix},$$
where $b=(b_{ij})_{q\times q}$, $c=(c_{ij})_{(p-q)\times q}$ and
$D=\operatorname{diag}(\lambda_1, \cdots, \lambda_q)$ with $\lambda_1\geq\cdots\geq\lambda_q\geq0$.
Let $X=(x_{ij})_{q\times q}$ and $Y=(y_{ij})_{q\times q}$
denote the symmetric and skew-symmetric parts of $b$, respectively.
Then we also have
$$\|[B_1,B_2]\|^2=2\sum_{i,j=1}^q(\lambda_i-\lambda_j)^2x_{ij}^2+2\sum_{i,j=1}^q(\lambda_i+\lambda_j)^2y_{ij}^2
+2\sum_{i=1}^{p-q}\sum_{j=1}^{q}\lambda_j^2c_{ij}^2.$$
Since $(\lambda_i-\lambda_j)^2\leq \lambda_i^2+\lambda_j^2\leq \sum_{i=1}^q\lambda_i^2=\frac12\|B_1\|^2$ and $(\lambda_i+\lambda_j)^2\leq2(\lambda_i^2+\lambda_j^2)\leq2\sum_{i=1}^q\lambda_i^2=\|B_1\|^2$
for any $1\leq i\neq j\leq n$, we obtain
$$\|[B_1,B_2]\|^2
\leq\|B_1\|^2\(\|X\|^2+2\|Y\|^2+\|c\|^2\)
\leq\|B_1\|^2\|B_2\|^2.$$
The equality condition is easily seen from the proof.
\end{proof}

\begin{lem}\label{BWQ(p,q)2}
If $B_1, B_2, B_3\in\mathcal{Q}(p,q)$ such that
$$\|[B_r,B_s]\|^2=\|B_r\|^2\|B_s\|^2\ \mbox{for any}\ 1\leq r\neq s\leq 3,$$
then at least one of the matrices $B_1, B_2, B_3$ is zero.
\end{lem}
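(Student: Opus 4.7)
The plan is to derive a contradiction from the assumption that all three $B_i$ are nonzero, exploiting the rigidity of the equality case inside the proof of Lemma \ref{BWQ(p,q)}: once $B_1$ is fixed, the set of nonzero $B \in \mathcal{Q}(p,q)$ that saturate $\|[B_1, B]\|^2 = \|B_1\|^2 \|B\|^2$ collapses to a single line through the origin, so $B_2$ and $B_3$ would then be collinear and hence commute.

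First I would dispose of the degenerate case $q = 1$ (we may assume $p \geq q$). Writing $B_r = \begin{pmatrix} 0 & b_r \\ b_r^t & 0 \end{pmatrix}$ with $b_r \in \mathbb{R}^{n-1}$, the identities $\|[B_r, B_s]\|^2 = 2(\|b_r\|^2\|b_s\|^2 - \langle b_r, b_s\rangle^2)$ and $\|B_r\|^2\|B_s\|^2 = 4\|b_r\|^2\|b_s\|^2$ yield the stronger bound $\|[B_r, B_s]\|^2 \leq \tfrac{1}{2}\|B_r\|^2\|B_s\|^2$; saturating the BW equality therefore forces $\langle b_r, b_s\rangle^2 = -\|b_r\|^2\|b_s\|^2$, hence $b_r = 0$ or $b_s = 0$, and the conclusion is immediate.

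Assume now $p \geq q \geq 2$ and all three $B_i$ nonzero. I would follow the normalization from the proof of Lemma \ref{BWQ(p,q)} and conjugate by the block-orthogonal $P_1 = \operatorname{diag}(U^t, V)$ (which preserves $\mathcal{Q}(p,q)$) so that $B_1$ takes the canonical form with $D = \operatorname{diag}(\lambda_1, \ldots, \lambda_q)$, $\lambda_1 \geq \cdots \geq \lambda_q \geq 0$. Inspecting the inequality chain $\|[B_1, B]\|^2 \leq \|B_1\|^2(\|X\|^2 + 2\|Y\|^2 + \|c\|^2) \leq \|B_1\|^2 \|B\|^2$ from that proof, overall equality with a nonzero partner $B$ forces $X = 0$ and $c = 0$ in the block decomposition of $B$, and $Y$ supported only at pairs $(i, j)$ with $(\lambda_i + \lambda_j)^2 = 2\sum_k \lambda_k^2$. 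Since $(\lambda_i + \lambda_j)^2 \leq 2(\lambda_i^2 + \lambda_j^2) \leq 2\sum_k \lambda_k^2$ becomes an equality precisely when $\lambda_i = \lambda_j > 0$ with $\lambda_k = 0$ for $k \notin \{i, j\}$, the matrix $B_1$ has exactly two equal nonzero singular values; after permuting the last $q$ coordinates inside $\mathcal{Q}(p,q)$ this distinguished pair is $\{1, 2\}$, and $B$ is pinned to the one-parameter family $B = \mu(E_{1,p+2} - E_{2,p+1} + E_{p+2,1} - E_{p+1,2})$ for some $\mu \in \mathbb{R}$.

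Since the distinguished pair $\{1,2\}$ is intrinsic to $B_1$, applying the same equality analysis to $(B_1, B_3)$ in these coordinates produces $B_3 = \mu'(E_{1,p+2} - E_{2,p+1} + E_{p+2,1} - E_{p+1,2})$ with $\mu' \neq 0$. Hence $B_3 = (\mu'/\mu) B_2$, so $[B_2, B_3] = 0$, contradicting $\|[B_2, B_3]\|^2 = \|B_2\|^2 \|B_3\|^2 > 0$. The main step needing care is the rigidity portion: one must verify that simultaneous tightness of both chained bounds on the $Y$-block excludes configurations with three or more equal positive singular values and pins the support of $Y$ to a single index pair. This bookkeeping is the computational core of the argument, but it follows directly from the termwise estimates already established inside the proof of Lemma \ref{BWQ(p,q)}.
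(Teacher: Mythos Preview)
Your argument is correct, but it follows a genuinely different route from the paper. The paper's proof is a two-line application of Theorem~\ref{DDVVQ(p,q)}: normalize so that $\|B_r\|=1$ for $r=1,2,3$, so each $\|[B_r,B_s]\|^2=1$; then summing gives $\sum_{r,s}\|[B_r,B_s]\|^2=6$, while the DDVV-type bound yields $\tfrac12\big(\sum_r\|B_r\|^2\big)^2=\tfrac92$, a contradiction. Your approach instead exploits the rigidity of the equality case in Lemma~\ref{BWQ(p,q)} directly: fixing a nonzero $B_1$ in canonical form, you show the saturating partners in $\mathcal{Q}(p,q)$ form a single line, forcing $B_2$ and $B_3$ to be proportional and hence to commute. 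The paper's method is shorter but leans on the full strength of Theorem~\ref{DDVVQ(p,q)}; yours is self-contained (needing only the proof of Lemma~\ref{BWQ(p,q)}) and makes the structural reason for the result transparent, at the cost of the extra bookkeeping you flagged. Both arguments also implicitly cover the degenerate case $q=1$ (yours explicitly, the paper's via Theorem~\ref{DDVVQ(p,q)} which holds for all $p,q$).
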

\begin{proof}
If $B_1, B_2, B_3$ are all nonzero matrices,
then we may assume without loss of generality that $\|B_1\|=\|B_2\|=\|B_3\|=1$.
Then we have $$\|[B_1, B_2]\|^2=\|[B_2, B_3]\|^2=\|[B_3, B_1]\|^2=1$$
and thus by Theorem \ref{DDVVQ(p,q)}, we obtain
$$6=\sum_{r, s=1}^3\|[B_r, B_s]\|^2\leq \frac{1}{2}\(\sum_{r=1}^3\|B_r\|^2\)^2=\frac{9}{2},$$
which is a contradiction.
\end{proof}

\subsection{Simons-type gap theorem}
Let $f: M^n\rightarrow N^{n+m}(\kappa)$ be an isometric immersion into a real space form,
and let $(E_1, \dots, E_{n+m})$ be a local orthonormal frame of $N^{n+m}(\kappa)$ such that
$E_1, \dots, E_n$ are tangent to $M$.
We use $\big\{\overline{\theta}_i: 1\leq i\leq n+m\big\}$ and
$\big\{\overline{\omega}_i^j: 1\leq i, j\leq n+m\big\}$ to denote the dual 1-forms and connection 1-forms
corresponding to $(E_1, \dots, E_{n+m})$, respectively.
Then the structure equations of $N^{n+m}(\kappa)$ are given by
$$\begin{cases}
&d\overline{\theta}_i=\sum\limits_{j=1}^{n+m}\overline{\omega}_i^j\wedge\overline{\theta}_j,\
\overline{\omega}_i^j=-\overline{\omega}_j^i,\\
&d\overline{\omega}_i^j=\sum_{k=1}^{n+m}\limits\overline{\omega}_i^k\wedge\overline{\omega}_k^j-\overline{R}_i^j,
\end{cases}$$
where $\overline{R}_i^j=\kappa\overline{\theta}_i\wedge\overline{\theta}_j$ denotes the curvature 2-forms
of the real space form $N^{n+m}(\kappa)$.
Let $\theta_i=f^*\overline{\theta}_i$, $\omega_i^j=f^*\overline{\omega}_i^j$
for all $1\leq i, j\leq n+m$. Then for $1\leq i, j\leq n$, we have the structure equations
$$\begin{cases}
&d\theta_i=\sum\limits_{j=1}^{n+m}\omega_i^j\wedge\theta_j,\ \omega_i^j=-\omega_j^i,\\
&d\omega_i^j=\sum_{k=1}^{n+m}\limits\omega_i^k\wedge\omega_k^j-R_i^j,
\end{cases}$$
where $R_i^j=f^*\overline{R}_i^j-\sum_{k=n+1}^{n+m}\limits\omega_i^k\wedge\omega_k^j$.
In addition, for each $1\leq r\leq m$, we have
$$\theta_{n+r}=0\ \mbox{and}\ \omega_i^{n+r}=\sum_{j=1}^nh_{ij}^r\theta_j,$$
where $(h_{ij}^r)_{n\times n}=A_r$ is the matrix corresponding to the second fundamental form along the direction $E_{n+r}$, i.e., the second fundamental form of $M$ can be written as
$$\operatorname{II}=\sum_{r=1}^m\sum_{i,j=1}^nh_{ij}^r\theta_i\otimes\theta_j\otimes E_{n+r}.$$
If we write $$\nabla\operatorname{II}=
\sum_{r=1}^m\sum_{i,j,k=1}^nh_{ijk}^r\theta_i\otimes\theta_j\otimes\theta_k\otimes E_{n+r},$$
then \begin{equation}\label{h_ijk}
\sum_{k=1}^nh^r_{ijk}\theta^k=dh^r_{ij}-\sum_{k=1}^nh^r_{kj}\omega_i^k
-\sum_{k=1}^nh^r_{ik}\omega_j^k+\sum_{s=1}^mh^s_{ij}\omega_{n+s}^{n+r}.
\end{equation}
Let $S=\|\operatorname{II}\|^2=\sum\limits_{r=1}^{m}\|A_r\|^2$
be the squared length of the second fundamental form.
We have the following Simons-type gap theorem for austere submanifolds of type $\mathcal{Q}(p,q)$.

\begin{thm}\label{Simons}
Suppose $M^n$ is a closed austere submanifold of type $\mathcal{Q}(p,q)$ in the unit sphere $\mathbf{S}^{n+m}$ and $m\geq1$.
Then $$\int_M (S-n)S\,dV_M\geq0.$$
In addition, if $S\leq n$, then $M$ is either a totally geodesic submanifold or
the Clifford torus $S^p\(\sqrt{\frac12}\)\times S^p\(\sqrt{\frac12}\)$ with $n=2p$.
\end{thm}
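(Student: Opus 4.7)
The plan is to combine the classical Simons identity for minimal submanifolds of $\mathbf{S}^{n+m}$ with a sharpened pointwise algebraic estimate that exploits both Lemma \ref{BWQ(p,q)} and Lemma \ref{BWQ(p,q)2}. Since austerity implies minimality, Simons' formula reads
\begin{equation*}
\tfrac{1}{2}\Delta S = \|\nabla\operatorname{II}\|^2 + nS - \sum_{r,s=1}^m\bigl(\operatorname{tr}(A_rA_s)\bigr)^2 - \sum_{r,s=1}^m\|[A_r,A_s]\|^2.
\end{equation*}
Integrating over the closed manifold $M$ eliminates the Laplacian term, so the desired inequality $\int_M(S-n)S\,dV_M\geq\int_M\|\nabla\operatorname{II}\|^2\,dV_M\geq 0$ will follow at once from the pointwise bound $\sum_{r,s}\bigl[(\operatorname{tr}(A_rA_s))^2+\|[A_r,A_s]\|^2\bigr]\leq S^2$.

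To establish this bound, observe that all three quantities involved are invariant under the $\operatorname{O}(m)$-factor of the $K(n,m)$-action on the normal frame, while $\mathcal{Q}(p,q)$ is preserved as a linear subspace. Applying an $\operatorname{O}(m)$-rotation to diagonalize the Gram matrix $\bigl(\operatorname{tr}(A_rA_s)\bigr)_{r,s}$, we may assume $\{A_r\}\subset\mathcal{Q}(p,q)$ is Frobenius-orthogonal with $\|A_r\|^2=\mu_r$, so $\sum_r\mu_r = S$ and $\sum_{r,s}(\operatorname{tr}(A_rA_s))^2 = \sum_r\mu_r^2$. Lemma \ref{BWQ(p,q)} then yields $\|[A_r,A_s]\|^2\leq\mu_r\mu_s$ pairwise, and summing gives
\begin{equation*}
\sum_{r,s}\bigl[(\operatorname{tr}(A_rA_s))^2+\|[A_r,A_s]\|^2\bigr] \leq \sum_r\mu_r^2 + 2\sum_{r<s}\mu_r\mu_s = \Bigl(\sum_r\mu_r\Bigr)^2 = S^2.
\end{equation*}

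For the second assertion, the hypothesis $S\leq n$ combined with $\int_M(S-n)S\,dV_M\geq 0$ forces $(S-n)S\equiv 0$, hence by continuity $S\equiv 0$ or $S\equiv n$ on each connected component. The vanishing case yields $M$ totally geodesic. When $S\equiv n$, every inequality used above saturates: $\nabla\operatorname{II}\equiv 0$, and in the diagonalized frame $\|[A_r,A_s]\|^2=\mu_r\mu_s$ for every pair of nonzero $A_r,A_s$, so by Lemma \ref{BWQ(p,q)2} at most two of the $A_r$ can be nonzero.

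The main obstacle is the subsequent geometric classification. If exactly one shape operator is nonzero, then the standard reduction-of-codimension theorem (using $\nabla\operatorname{II}\equiv 0$ and the constancy of the first normal bundle) embeds $M$ as a closed minimal hypersurface of a totally geodesic $\mathbf{S}^{n+1}\subset\mathbf{S}^{n+m}$ with $\|A\|^2\equiv n$ and parallel shape operator; Simons' classical gap theorem then identifies $M$ as a Clifford torus, and the austerity constraint forces equal multiplicities, yielding $S^p\(\sqrt{\frac12}\)\times S^p\(\sqrt{\frac12}\)$ with $n=2p$. I expect the delicate step to be ruling out exactly two nonzero shape operators: by the equality description in Lemma \ref{BWQ(p,q)} such a pair is (up to $K(n,m)$) of the normal form $(\lambda\operatorname{diag}(C_1,0),\mu\operatorname{diag}(C_2,0))$, so $\operatorname{II}$ has kernel of dimension $n-4$ everywhere; combined with $\nabla\operatorname{II}\equiv 0$ and the closedness of $M$, a nullity-foliation/splitting argument should either reduce this configuration to the one-shape-operator case or contradict it outright.
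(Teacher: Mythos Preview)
Your derivation of the integral inequality is correct and essentially identical to the paper's: both diagonalize the Gram matrix of the shape operators, apply Lemma~\ref{BWQ(p,q)} pairwise, and integrate the Simons formula. Your treatment of the one-operator equality case via codimension reduction and the Chern--do Carmo--Kobayashi gap theorem is also sound, and in fact organizes the case split more cleanly than the paper (which splits by $m=1$ versus $m\geq 2$ rather than by the rank of the first normal space).

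The genuine gap is exactly where you flag it: the case of two nonzero shape operators. Your ``nullity-foliation/splitting argument'' is not carried out, and this is not the route the paper takes. The paper proceeds concretely as follows. With $\nabla\operatorname{II}=0$ and $(A_1,A_2)=(\lambda\operatorname{diag}(C_1,0),\mu\operatorname{diag}(C_2,0))$ in a moving frame, the equations $h^r_{ijk}=0$ (equation~(\ref{h_ijk}) with zero left side) force $\omega^i_j=0$ for $1\leq i\leq 4<j\leq n$. Feeding this into the structure equation $d\omega^1_j=\sum_k\omega^k_j\wedge\omega^1_k-R^1_j$ on the unit sphere yields $\theta_1\wedge\theta_j=0$ for $j\geq 5$, which is impossible unless $n=4$. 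When $n=4$, every shape operator $aA_1+bA_2$ has eigenvalues $\pm\sqrt{a^2\lambda^2+b^2\mu^2}$ of multiplicity two, so $M$ has at most two principal curvatures in every normal direction; the Jorge--Mercuri classification \cite{JM84} then forces $M$ to be the Veronese $\mathbb{CP}^2$, whose second fundamental form does \emph{not} have the form $(\lambda C_1,\mu C_2)$, giving the contradiction. Thus the obstruction comes from the ambient sphere curvature together with an external classification result, not from an intrinsic splitting of $M$; a nullity argument alone would not reach this conclusion without comparable input.
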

\begin{proof}
First we recall the well-known Simons-type formula
\begin{equation}\label{simons-for}
\frac12\Delta S=\|\nabla\operatorname{II}\|^2+nS
-\sum_{r,s=1}^m\|[A_r, A_s]\|^2-\sum_{r,s=1}^m|\<A_r, A_s\>|^2.
\end{equation}
For any point $p\in M$, by choosing suitable basis in normal direction,
we may assume that $\<A_r, A_s\>=0$ for any $1\leq r\neq s\leq m$.
It follows from Lemma \ref{BWQ(p,q)} that
$$\sum_{r,s=1}^m\|[A_r, A_s]\|^2\leq\sum_{1\leq r\neq s\leq m}\|A_r\|^2\|A_s\|^2
=\(\sum_{r=1}^m\|A_r\|^2\)^2-\sum_{r=1}^m\|A_r\|^4,$$
i.e., $$\sum_{r,s=1}^m\|[A_r, A_s]\|^2+\sum_{r=1}^m\|A_r\|^4\leq S^2.$$
Then by the Simons-type formula (\ref{simons-for}), we have
\begin{equation}\label{simons-for2}
\frac12\Delta S\geq\|\nabla\operatorname{II}\|^2+nS-S^2.
\end{equation}
Integrating (\ref{simons-for2}) over $M$, we obtain
$$\int_M (S-n)S\,dV_M\geq\int_M\|\nabla\operatorname{II}\|^2\,dV_M\geq0.$$
If in addition $S\leq n$,
then we have $\nabla\operatorname{II}=0$ and $S\equiv0$ or $n$.
When $S\equiv0$, $M$ is a totally geodesic submanifold.
When $S\equiv n$, we divide the discussion into two cases:

$(1)$ If $m=1$, then it follows from \cite[Theorem 2]{CDK} that $M$ is isometric to one of the Clifford torus $S^k\(\sqrt{\frac{k}{n}}\)\times S^{n-k}\(\sqrt{\frac{n-k}{n}}\)$, $1\leq k\leq n$.
However, only when $n=2k$ and $k=p=q$, the submanifold $M$ is of type $\mathcal{Q}(p,q)$.

$(2)$ If $m\geq2$, then by Lemma \ref{BWQ(p,q)2}, we may assume without loss of generality that
\begin{equation}\label{cond}
A_1=\lambda\operatorname{diag}(C_1,0), A_2=\mu\operatorname{diag}(C_2,0)
\ \mbox{and}\ A_r=0\ \mbox{for}\ r>2,
\end{equation}
where $\lambda,\mu$ are smooth function on $M$. In other words,
$$\omega^{n+1}_1=\lambda\theta_3,\ \omega^{n+1}_2=\lambda\theta_4,\ \omega^{n+1}_3=\lambda\theta_1,\ \omega^{n+1}_4=\lambda\theta_2,\ \omega^{n+1}_i=0\ \mbox{for}\ i>4,$$
$$\omega^{n+2}_1=-\mu\theta_4,\ \omega^{n+2}_2=\mu\theta_3,\ \omega^{n+2}_3=\mu\theta_2,\ \omega^{n+2}_4=-\mu\theta_1,\ \omega^{n+2}_i=0\ \mbox{for}\ i>4,$$
$$\omega^{n+r}_i=0\ \mbox{for}\ r>2\ \mbox{and}\ 1\leq i\leq n.$$
Since $\nabla\operatorname{II}=0$, equation (\ref{h_ijk}) becomes
\begin{equation}\label{dh_i}
dh^r_{ij}=\sum_{k=1}^nh^r_{kj}\omega_i^k+\sum_{k=1}^nh^r_{ik}\omega_j^k-\sum_{s=1}^mh^s_{ij}\omega_{n+s}^{n+r}.
\end{equation}
Setting $r=1$, $j\geq5$ and $i=1, 2, 3, 4$, we obtain $\omega^3_j=\omega^4_j=\omega^1_j=\omega^2_j=0$, respectively.
It follows that for each $j\geq5$,
$$0=d\omega^1_j=\sum_{k=1}^n\omega^k_j\wedge\omega^1_k-R^1_j=-\theta_1\wedge\theta_j.$$
Since $\theta_1, \cdots, \theta_n$ are orthonormal,
we have $\theta_j=0$ for $j\geq 5$, and thus $\operatorname{dim}M=4$.
Direct calculations show that for any $a,b\in\mathbb{R}$,
the eigenvalues of $aC_1+bC_2$ are $\pm\sqrt{a^2+b^2}$ of multiplicity 2.
In other words, $M$ has at most two distinct principal curvatures of multiplicity $2$ in every normal direction. Then it follows from \cite[Theorem 1.5]{JM84} that
$M$ is given by the Veronese embeddings of complex projective plane.
But the matrices corresponding to the second fundamental form of this Veronese embedding does not satisfy condition (\ref{cond}). (One can find an explicit description in \cite[p. 560]{Li16}.)
\end{proof}



\end{document}